\documentclass[12pt]{amsart}

\usepackage{amsfonts,amsthm,amsmath,amssymb,amscd,mathrsfs,tikz}
\usepackage{graphics}
\usepackage{indentfirst}
\usepackage{cite}
\usepackage{latexsym}
\usepackage[dvips]{epsfig}
\usepackage{lmodern}
\usepackage{color}
\usepackage{amssymb,amsmath,bm}
\usepackage{fontspec}

\usepackage[title]{appendix}
\usepackage{enumerate}

\newtheorem{theorem}{Theorem}[section]
\newtheorem{remark}{Remark}[section]
\newtheorem{definition}{Definition}[section]
\newtheorem{lemma}[theorem]{Lemma}
\newtheorem{pro}[theorem]{Proposition}

\newcommand{\bt}{\begin{theorem}}
	\newcommand{\bl}{\begin{lemma}}
		\newcommand{\el}{\end{lemma}}
	\newcommand{\et}{\end{theorem}}

\newcommand{\bn}{\begin{eqnarray}}
	\newcommand{\en}{\end{eqnarray}}
\newcommand{\bnn}{\begin{eqnarray*}}
	\newcommand{\enn}{\end{eqnarray*}}
\newcommand{\R}{\mathbb{R}}
\newcommand{\ba}{\begin{aligned}}
	\newcommand{\ea}{\end{aligned}}
\newcommand{\be}{\begin{equation}}
	\newcommand{\ee}{\end{equation}}

\newcommand{\p}{\partial}

\newcommand{\dd}{\mathrm{d}}
\newcommand{\Bv}{{\boldsymbol{v}}}
\newcommand{\BU}{\boldsymbol{U}}
\newcommand{\bBV}{\boldsymbol{V}}
\newcommand{\bBW}{\boldsymbol{W}}

\newcommand{\Bu}{{\boldsymbol{u}}}
\newcommand{\Be}{{\boldsymbol{e}}}
\newcommand{\BF}{{\boldsymbol{F}}}

\newcommand{\Bx}{{\boldsymbol{x}}}
\newcommand{\Bf}{{\boldsymbol{f}}}
\newcommand{\Bw}{{\boldsymbol{w}}}
\newcommand{\By}{{\boldsymbol{y}}}

\newcommand{\OR}{{\mathscr{O}}_{R}}
\newcommand{\DR}{{\mathcal{D}}_{R}}

\newcommand{\BP}{{\boldsymbol{\Psi}}}

\begin{document}
	
 \title[Uniqueness and Asymptotic behavior]
	{Uniqueness and asymptotic behavior of the stationary Navier-Stokes equations in a slab}

     \author{Han Li}
\address{School of Mathematical Sciences, Soochow University, No. 1 Shizi Street, Suzhou, China}
\email{hli3@stu.suda.edu.cn}

		\author{Jingwen Han}
		\address{School of Mathematics and Statistics, Anhui Normal University, Wuhu, China}
		\email{mathjwh95@ahnu.edu.cn}

	\begin{abstract}
In this paper, we investigate the uniqueness and asymptotic behavior of the stationary Navier-Stokes equations in a slab domain with no-slip boundary conditions.    Specifically, under the given external  force and the  small Poiseuille flow assumptions, we prove that there is a  $H^{1}$ solution, and obtain the pointwise  decay around the Poiseuille flow at far field. Furthermore, if the  force is small, the solution is shown to be unique.  The key point of the proof is the estimate of the Dirichlet integral of the solutions in the truncated domains  and the Stokes regularity estimates.

        \end{abstract}

	\keywords{uniqueness,  asymptotic behavior, slab, no-slip boundary conditions.}
	\subjclass[2010]{
		35Q30,  35B40, 35A02}

	
	\maketitle

	\section{Introduction and Main Results}\label{sec1}
The stationary Navier-Stokes equations are of fundamental importance in fluid mechanics, describing the steady motion of viscous incompressible fluids. In this paper, we are interested in the uniqueness and  pointwise asymptotic behavior of the solutions to the nonhomogeneous stationary Navier-Stokes equations in a three-dimensional slab domain $\Omega = \mathbb{R}^2 \times (0,1)$, given by:
\begin{equation}\label{eqsteadynfsf}
\begin{cases}
-\Delta \Bu + (\Bu \cdot \nabla)\Bu + \nabla P = \Bf, \\ 
\nabla \cdot \Bu = 0,
\end{cases} \quad \text{in } \Omega.
\end{equation}
Here, the unknown vector field $\Bu = (u^1, u^2, u^3)$ represents the fluid velocity, $P$ is the pressure, and $\Bf = (f^1, f^2, f^3)$ is the external force. The slab domain is supplemented with the  no-slip boundary conditions:
\begin{equation}\label{noslipboun}
\Bu = 0, \quad \text{at } x_3 = 0 \text{ and } 1.
\end{equation}

The solvability and asymptotic properties of the Stokes and Navier-Stokes equations in domains with noncompact boundaries---such as cylindrical, quasi-cylindrical, and layer-like outlets---have been the subject of extensive mathematical research over the past few decades.   Fundamental results regarding the weak solvability of    Stokes and Navier-Stokes equations in such general unbounded domains were established by Heywood, Ladyzhenskaya, Solonnikov,  Pileckas, and Nazarov (see,  \cite{LSZNSL80,Heywood1976,PileJSM84,Ladyzhenskaya1977,Solonnikov1981,Solonpile1977}). In \cite{Pimmmas96,PileRs9,Pimtfm96}, the classical solvability and asymptotic behavior of solutions to these problems were proved. 


For the homogeneous case ($\Bf=0$), a Liouville-type rigidity theorem, recently established by Bang et al. \cite{aBGWX} and Carrillo et al. \cite{BPZZ}, asserts that any D-solution---namely, a weak solution with finite Dirichlet integral $\int_{\Omega} |\nabla \Bu|^2 \dd \Bx < \infty$---in a slab with no-slip boundary conditions must identically vanish.
 This rigidity  implies that any physically meaningful non-trivial flow in a slab, such as the well-known Poiseuille flow,  possesses an infinite Dirichlet integral.  See also Tsai \cite{TT21} for some extensions of Liouville-type theorem  under the integrability conditions.

 The pioneering investigations of solutions in unbounded domains with unbounded Dirichlet integrals date back to Ladyzhenskaya and Solonnikov \cite{LSZNSL80}. Subsequently, a series of comprehensive works  for domains that are layer-like outlets at infinity were established by  Nazarov, Pileckas, and Specovius-Neugebauer\cite{NP99JMFM,1NP99JMFM,PMSLI2002,NPZAA01,PMZAAS08,KPSNAA10}, where they obtained  systematically explored the solvability and asymptotic properties of weak solutions in various weighted and anisotropic functional spaces. 

In particular, Nazarov and Pileckas~\cite{NP99JMFM} investigated the solvability of the weak solutions to the Navier-Stokes equations in a domain that coincides with the three-dimensional layer $\mathbb{R}^2 \times (0,1)$ outside a compact ball. By employing the Banach contraction mapping principle, they constructed non-trivial solutions that asymptotically approach classical laminar flows, such as the plane-parallel Poiseuille flow, Couette flow, and rotational flow. Specifically, for the Poiseuille flow at far field, they proved that if the given external force and the  far-field mass flux are sufficiently small, there exists a unique weak solution whose perturbation field possesses a finite Dirichlet integral. 
In this paper, we try to investigate the problem with large external force. 

The classical Leray problem for the stationary Navier–Stokes equations in channels and pipes has motivated lots of works on the asymptotic behavior of solutions in domains with noncompact boundaries. For two-dimensional channels with no-slip boundary conditions and prescribed flux $\Phi$, the first author and sha \cite{LSJDE25}
proved existence of weak solutions for any flux $\Phi$,
 as long as the cross-section width grows sublinearly at infinity, uniqueness for sufficiently small flux $\Phi$
under additional geometric assumptions. They also obtained the pointwise decay rate of the velocity for arbitrary flux.  In a complementary three-dimensional direction, for layer-like domains that coincide outside a ball with the  slab $\Omega = \mathbb{R}^2 \times (0,1)$,  Pileckas \cite{PMSLI2002} established that weak solutions with nonzero flux behave at infinity like solutions of the linear Stokes system under the certain weighted integral assumptions. This analysis was refined by Pileckas and Specovius-Neugebauer \cite{KPSNAA10}, who showed that if the data decay sufficiently fast, then the first three asymptotic terms of any solution growing not too fast at infinity have the same structure as those of the linear Stokes problem.  In this paper, we also study the pointwise asymptotic behavior of solutions in the slab domain.

Precisely, we are interested in steady fluid motions in a perfect slab $\Omega = \mathbb{R}^2 \times (0,1)$ that are perturbed by a large localized external force,  and converge to a Poiseuille flow at spatial infinity. To  formulate the problem from a clear physical perspective, we introduce  the concept of the line-flux.

Following the geometric construction in \cite[Section 2.3]{NP99JMFM}, let $\Be^{(\alpha)} = (\cos\alpha, \sin\alpha, 0)$ denote a prescribed horizontal direction. We introduce a rectangular cross-section perpendicular to $\Be^{(\alpha)}$ with a finite lateral width $l > 0$, defined by
\begin{equation}
    Q_l^{(\alpha)} = \left\{ \Bx \in \Omega : \eta_1 = \text{const.} \in \mathbb{R}, \; |\eta_2 - \eta_{20}| < \frac{l}{2}, \; \eta_{20} \in \mathbb{R}, \; x_3 \in (0,1) \right\},
\end{equation}
where $\eta_1 = x_1\cos\alpha + x_2\sin\alpha$ coincides with the flow direction, and $\eta_2 = -x_1\sin\alpha + x_2\cos\alpha$ is the transverse coordinate. Then, the average line-flux $F$ across $Q_l^{(\alpha)}$ is rigorously defined by the limit:
\begin{equation}\label{eq:flux_u}
    F = \lim_{l \to \infty} \frac{1}{l} \int_{Q_l^{(\alpha)}} \Bu(x) \cdot \Be^{(\alpha)} \, \dd\eta_2 \dd x_3.
\end{equation}

We prescribe a constant scalar far-field line-flux $F \in \mathbb{R}$ across the transverse cross-section perpendicular to $\Be^{(\alpha)}$. According to the mass conservation in the slab domain, this prescribed flux $F$ uniquely determines an exact laminar Poiseuille background flow $\bBW(x_3)$ directed along $\Be^{(\alpha)}$. Since $\bBW$ depends solely on the vertical coordinate $x_3$, the transverse integration over $\eta_2$ and the averaging factor $1/l$ naturally cancel out. Thus, its corresponding line-flux strictly matches $F$:
\begin{equation}\label{eq:flux_W}
    \bBW(x_3) = 6F x_3(1-x_3)\Be^{(\alpha)}, \quad \text{with} \quad \mathcal{F}_W = \int_0^1 \bBW(x_3) \cdot \Be^{(\alpha)} \, \dd x_3 = F.
\end{equation}

By utilizing this explicit correspondence, we show there is a velocity field $\Bu = \bBW + \Bv$ whose far-field line-flux  approaches to the prescribed constant $F$ along the direction $\Be^{(\alpha)}$.

Now we state the main results of this paper.

\begin{theorem}\label{DS}
Let the slab domain $\Omega=\mathbb{R}^{2}\times(0,1)$ and let $V'$ denote the dual space of $V = D_{0,\sigma}^{1,2}(\Omega)$.  Assume that the external force $\Bf \in V^{\prime}$.  Let    $F \in \R$  be a prescribed far-field constant line-flux, and $\bBW(x_3)$ be the  corresponding Poiseuille flow defined in \eqref{eq:flux_W}. 

There exists a small constant $\epsilon_0$ such that if the prescribed far-field flux satisfies  $|F |\le \epsilon_0$, then there exists a weak solution $\Bu = \bBW + \Bv$ to the stationary Navier-Stokes equations \eqref{eqsteadynfsf} satisfying the no-slip boundary conditions \eqref{noslipboun}, such that the perturbation field $\Bv$ belongs to $H^{1}(\Omega)$.

Furthermore,  if the external force $\Bf$ has a compact support,  the solution satisfies the following properties:
\begin{itemize}
    \item[(i)] Pointwise decay and flux convergence:  There exists a  constant $\alpha>0$ such that for any sufficiently large $R_{1}$, the following pointwise decay holds in the far field:
\begin{equation*}\label{eq:pointwise_decay_thm}
|\Bu(x) - \bBW(x_3)| \le C R^{-\alpha}, \quad \forall \Bx = (x_1, x_2, x_3) \in \Omega \text{ with } |x'| \ge R_{1},
\end{equation*}
where $x' = (x_1, x_2)$, $R = |x'|$, and the  constant $C > 0$ and $\alpha$ depend only on $\Omega$ and $F$.

    Consequently, the line-flux of $\boldsymbol{u}$ asymptotically converges to the prescribed constant $F$ along the direction $\boldsymbol{e}^{(\alpha)}$ at spatial infinity, i.e.,
    \begin{equation}\label{eq:flux_limit}
    \lim_{|\boldsymbol{x}'| \to \infty} \int_0^1 \boldsymbol{u}(\boldsymbol{x}', x_3) \cdot \boldsymbol{e}^{(\alpha)} \, \dd x_3 = F.
    \end{equation}
      \item[(ii)] Uniqueness: In addition, if there exists a small constant $\epsilon_1$ such that  the external force satisfies  $\|\Bf\|_{V'(\Omega)} \le \epsilon_1$, then the solution $\Bu$ obtained above is unique in the class of functions satisfying $\Bu - \bBW \in H^{1}(\Omega)$.
\end{itemize}
\end{theorem}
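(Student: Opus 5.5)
Write $\Bu=\bBW+\Bv$. Since $\bBW$ is an exact shear solution with a constant pressure gradient ($-\Delta\bBW+\nabla(-12F\eta_1)=0$ and $(\bBW\cdot\nabla)\bBW=0$), the perturbation satisfies
\begin{equation*}
-\Delta\Bv+(\bBW\cdot\nabla)\Bv+(\Bv\cdot\nabla)\bBW+(\Bv\cdot\nabla)\Bv+\nabla q=\Bf,\qquad \nabla\cdot\Bv=0,\qquad \Bv|_{x_3=0,1}=0 .
\end{equation*}
I would obtain existence by a Leray--Galerkin scheme: solve the problem on truncated domains $\Omega_R=\{|x'|<R\}\times(0,1)$ (or by Galerkin approximation in $V$) and pass to the limit $R\to\infty$. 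The a priori estimate comes from testing with $\Bv$. The convective term $\int(\bBW\cdot\nabla)\Bv\cdot\Bv$ vanishes because $\nabla\cdot\bBW=0$, and the cubic term vanishes as well. The only remaining term is
\begin{equation*}
\Bigl|\int(\Bv\cdot\nabla)\bBW\cdot\Bv\Bigr|\le\|\partial_3\bBW\|_{L^\infty}\|\Bv\|_{L^2}^2\le 6|F|\,C_P^2\|\nabla\Bv\|_{L^2}^2 ,
\end{equation*}
where $C_P$ is the Poincar\'e constant of the slab, available because $\Bv=0$ on both plates. Choosing $\epsilon_0$ with $6\epsilon_0C_P^2\le\frac12$ gives $\|\nabla\Bv\|_{L^2}\le 2\|\Bf\|_{V'}$ for arbitrary, possibly large, $\Bf$. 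Poincar\'e then yields $\Bv\in H^1(\Omega)$. Weak compactness, together with local strong $L^2$ compactness for the nonlinear term, lets us pass to the limit. The pressure is recovered by de Rham's theorem.

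For (i), assume $\operatorname{supp}\Bf\subset B_{R_0}$. The plan is to estimate the tail Dirichlet integral $Y(R)=\int_{\Omega\setminus\Omega_R}|\nabla\Bv|^2$ for $R>R_0$. Test the equation with $\Bv\zeta_R$, where $\zeta_R$ is a cutoff supported in $\{|x'|>R\}$. On the transition layer $\omega_R=\Omega_{R+1}\setminus\Omega_R$, control the cutoff error terms as follows:
\begin{itemize}
\item the pressure term, by a Bogovskii correction on $\omega_R$ (a bounded Lipschitz domain with uniform constants);
\item the terms $|\Bv|^2\nabla\zeta_R$ and $|\Bv|^3\nabla\zeta_R$, by Poincar\'e in $x_3$ and the Sobolev inequality on $\omega_R$.
\end{itemize}
This should give a differential inequality of the form $Y(R)\le C\bigl(-Y'(R)\bigr)+C\bigl(-Y'(R)\bigr)^{3/2}$. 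Since $Y(R)\to0$ (because $\Bv\in H^1$), the standard Ladyzhenskaya--Solonnikov argument gives exponential or at least algebraic decay $Y(R)\le CR^{-2\alpha}$. The main obstacle is the cubic term, whose $3/2$ power must be dominated using the smallness of $Y$ at large $R$.

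Next, to convert energy decay to pointwise decay, apply local Stokes regularity (interior and flat-boundary) on unit cells $\{|x'-x_0'|<1\}\times(0,1)$. Treat $(\Bu\cdot\nabla)\Bv+(\Bv\cdot\nabla)\bBW$ as a forcing term and bootstrap $H^1\to W^{2,p}$ with $p>3/2$ or $H^2$, obtaining $\|\Bv\|_{L^\infty(\text{cell})}\le C\|\nabla\Bv\|_{L^2(\text{enlarged cell})}\le CR^{-\alpha}$. Integrating this bound in $x_3$ gives the flux limit \eqref{eq:flux_limit}, since $\int_0^1\bBW\cdot\Be^{(\alpha)}\,\dd x_3=F$.

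For (ii), suppose $\Bv_1,\Bv_2$ are two $H^1$ perturbations and set $\Bw=\Bv_1-\Bv_2$. Testing the difference equation with $\Bw$ (justified since $\Bv_i\in H^1\subset L^4$, so every trilinear term is finite and the antisymmetry $\int(\Bu_2\cdot\nabla)\Bw\cdot\Bw=0$ holds) gives
\begin{equation*}
\|\nabla\Bw\|^2\le 6|F|C_P^2\|\nabla\Bw\|^2+\Bigl|\int(\Bw\cdot\nabla)\Bv_1\cdot\Bw\Bigr|\le\Bigl(\tfrac12+C\|\nabla\Bv_1\|_{L^2}\Bigr)\|\nabla\Bw\|^2 .
\end{equation*}
Here the second term is bounded by the Ladyzhenskaya/Sobolev inequality in the slab. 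By the energy bound, $\|\nabla\Bv_1\|\le 2\epsilon_1$. Choosing $\epsilon_1$ with $2C\epsilon_1<\frac12$ then forces $\Bw=0$.

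The delicate point is that the energy bound must hold for the \emph{arbitrary} competitor, not only for the constructed solution. This follows because any $H^1$ weak solution satisfies the energy equality: the test function $\Bv$ is admissible, since each $\Bv\in H^1\cap V$.
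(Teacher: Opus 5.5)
The gap is in your decay step. Your existence argument, the Stokes-regularity bootstrap to $L^\infty$, the flux limit and the uniqueness argument essentially coincide with the paper's. The problem is the claim that the Bogovskii operator on $\omega_R=\{R<|x'|<R+1\}\times(0,1)$ has an $R$-independent constant.

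\emph{Why the constant is not uniform.} $\omega_R$ is a ring of unit cross-section and length about $2\pi R$. A datum that oscillates in $\theta$ forces mass to be carried around the ring. Take $g=\cos\theta$. Any $\BP\in H^1_0(\omega_R)$ with $\mathrm{div}\,\BP=g$ has azimuthal flux $\Phi(\theta)=\int_{\Sigma_\theta}\BP\cdot\Be_\theta\,\dd r\,\dd z=(R+\tfrac12)\sin\theta+c$ through the unit cross-sections $\Sigma_\theta$. Poincar\'e on $\Sigma_\theta$ then gives $\|\nabla\BP\|^2_{L^2(\omega_R)}\gtrsim R\int_0^{2\pi}\Phi^2\,\dd\theta\gtrsim R^3$, while $\|g\|^2_{L^2(\omega_R)}\approx\pi R$. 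So the constant is at least of order $R$. Your datum $\Bv\cdot\nabla\zeta_R=v^r\zeta_R'$ has nothing that excludes such modes.

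\emph{Consequence.} The inequality $Y\le C(-Y')+C(-Y')^{3/2}$ cannot be derived this way, and it is in fact false in general. Together with $Y\to0$ it would force exponential decay of the tail Dirichlet integral. In a layer, however, a localized horizontal force produces to leading order a Hele-Shaw potential dipole $\Bv\approx x_3(1-x_3)\nabla'\Pi$ with $\Pi\sim\cos\theta/|x'|$, so that $Y(R)\sim R^{-2}$.

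\emph{The repair.} What is missing is bookkeeping of this factor $R$, which is what the paper's cylindrical-coordinate construction provides. It applies the Bogovskii map to $rv^r$ on the fixed box $\DR=(R-1,R)\times(0,2\pi)\times(0,1)$ in the variables $(r,\theta,z)$, where the constant is uniform. The mean of $rv^r$ vanishes because the flux of $\Bv$ through every cylinder $|x'|=r$ is zero; this compatibility condition is one you also need and did not check. The powers of $R$ reappear when $\partial_\theta$ is converted into physical derivatives.

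The azimuthal part of the pressure term then contributes $CR\|\nabla\Bv\|^2_{L^2(\OR)}+CR\|\nabla\Bv\|^3_{L^2(\OR)}$. The cubic part is absorbed using only the global bound $\|\nabla\Bv\|_{L^2(\Omega)}\le 2\|\Bf\|_{V'}$; no smallness of $Y$ is needed. One arrives at $Y(R)\le C_1R\,(-Y'(R))$, whence $Y(R)\le CR^{-1/C_1}$.

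So the real obstacle is the degeneration of the divergence problem on the thin ring, not the cubic term. This is exactly why the theorem only gives some unspecified rate $\alpha=1/(2C_1)$. Your fallback ``at least algebraic decay'' is the right conclusion, but it follows from this corrected inequality, not from the one you proposed.
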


\begin{remark}
    The assumption that $\Bf$ has a compact support can be replaced by  $\Bf$ has the decay property at infinity.
\end{remark}

\begin{remark}\label{rem:layer_like}
Although the analysis in this paper is formulated within a  perfect slab $\Omega = \mathbb{R}^2 \times (0,1)$, our main results can be directly extended to the class of layer-like domains investigated by Nazarov and Pileckas \cite{NP99JMFM}, which coincide with the  slab  outside a compact set (e.g., a sufficiently large ball).
\end{remark}

\begin{remark}\label{rem:flux_mechanism}
The flux convergence \eqref{eq:flux_limit} is a  consequence of Proposition \ref{prop4.1}. Since the perturbation field $\Bv$ belongs to  $ D_{0,\sigma}^{1,2}(\Omega)$, it inherently decays to zero at spatial infinity, causing its corresponding mass flux to vanish as $|x'| \to \infty$. It then follows immediately from the decomposition $\Bu = \bBW + \Bv$ that the total far-field line-flux of $\Bu$ approaches to the flux of the Poiseuille flow $\bBW$, which is precisely $F$.
\end{remark}

\begin{remark}\label{rem:np_comparison}
Under the same prescribed far-field flux setting, Theorem \ref{DS} has two improvements compared to the previous results in \cite{NP99JMFM}. Firstly, we optimize the solvability condition by  removing the smallness restriction on the external force $f$ for the existence of weak solutions. Secondly, beyond the global energy bounds, we further establish the pointwise algebraic decay rate $|\Bu(x) - \bBW(x_3)| \le C |x'|^{-\alpha}$ in the far field.
\end{remark}


    

	Now we outline  the idea of the proof for the main results.  In order to prove the asymptotic  behavior of the stationary Navier-Stokes equations around the Poiseuille flow, firstly, we use the Bogovskii map, the structure of the Navier-Stokes equations to establish the Saint-Venant type estimate for the Dirichlet integral of $\Bv$ over the finite subdomain.  Then we utilize the Stokes regularity estimates to obtain the pointwise decay. The Saint-Venant principle was initially used to study the solutions of elastic equations \cite{RATARMA65,JKKTARMA66}. The idea was generalized in \cite{LSZNSL80} to investigate the uniqueness and asymptotic behavior of solutions to the Navier-Stokes equations in pipe domains. Recently, it was applied to study the Liouville-type theorems for the Navier-Stokes equations in a slab \cite{aBGWX}.
	

The rest of this paper is organized as follows. In Section \ref{Sec2}, we introduce the Bogovskii map, some notations, and inequalities, which are used in this paper. Section \ref{Sec3} is devoted to establishing the estimate of Dirichlet integrals on  truncated domains, and  the pointwise decay rate of the solutions.  The uniqueness of the Navier-Stokes equations in a slab with a small external force  are proved in Section \ref{Sec4}.

	\section{PRELIMINARIES}\label{Sec2} 
	Some  notations are given below. 
	We denote the cut-off domain $\mathcal{D}_R =(R-1, R)\times(0, 2\pi)\times(0, 1)$, $\OR=(B_{R}\setminus \overline{B_{R-1}})\times (0,1)$,  where $B_{R}=\left\{(x_{1},x_{2})\in\mathbb{R}^{2}: x^{2}_{1}+x^{2}_{2}<R^2\right\}$. For any $\Bx\in \mathbb{R}^3$, define $\mathscr{B}_r(\Bx)=\{\By\in \mathbb{R}^3: |\By-\Bx|<r\}$.
	Define the space $D_{0}^{1,2}(\Omega)$ be the closure of $C^{\infty}_{c}(\Omega)$ in the norm $\|\Bu\|_{D^{1,2}(\Omega)}=\|\nabla\Bu\|_{L^{2}(\Omega)}$, and $V=D_{0,\sigma}^{1,2}(\Omega)$ denotes the subspace of $D_{0}^{1,2}(\Omega)$ consisting of divergence-free functions.    $V^{\prime}$ is the dual space of $V$. $C_{c,\sigma}^{\infty}{(\Omega)}$ is the   subspace of $C_{c}^{\infty}{(\Omega)}$  consisting of divergence-free functions.
    
 The standard cylindrical coordinates $(r,\theta,  z)$ in
	$\mathbb{R}^3$  are defined as follows:
	\[
	\Bx=(x_{1}, x_{2}, x_{3})=(r\cos\theta, r\sin\theta, z).
	\]
	In cylindrical coordinates, the velocity $\Bu$  can be written as 
	\begin{equation}\label{cylindcoo}
		\Bu=u^{r}(r,\theta,z)\Be_{r}+u^{\theta}(r,\theta,z)\Be_{\theta}+u^{z}(r,\theta,z)\Be_{z},
	\end{equation}
	where scalar components $u^{r}, u^{\theta}, u^{z}$ are called radial, swirl and axial velocity, respectively, and the basis vectors $\Be_{r}, \Be_{\theta}, \Be_{z}$ are
	\[
	\Be_{r}=(\cos\theta, \sin\theta, 0), \quad
	\Be_{\theta}=(-\sin\theta, \cos\theta,0), \quad
	\Be_{z}=(0,0,1).
	\]
  Cylindrical coordinates will be utilized in the following proof.  
    
	\begin{definition}
		Let $1\leq p \leq \infty$ and  $D $ be a bounded domain in $\mathbb{R}^n$. Denote
		\[
		L^p_0(D)=\left\{v(\Bx):v\in L^p(D), \int_D v(\Bx)\,\dd \Bx=0\right\}. 
		\]
	\end{definition}
	
	Then we introduce the Bogovskii map, which gives a solution to the divergence equation.  	The following is from   \cite[Lemma 2.1]{aBGWX}.
    Its general form can be refer to  Bogovskii\cite{BM}, see \cite[Section III.3]{GAGP11} and \cite[Section 2.8]{TT18}.
	\begin{lemma}\label{Bogovskii}
		Let $D$ be a bounded Lipschitz domain in $\mathbb{R}^n$, $n\geq 2 $.  For any $p\in (1, +\infty)$, there is a linear map $\boldsymbol{\Phi}$ that maps a scalar function $g\in L^p_0(D)$ to a vector field $\bBV = \boldsymbol{\Phi} g \in W_0^{1, p}(D; \mathbb{R}^n)$ satisfying
		\be \nonumber
		{\rm div}~\bBV = g \ \text{in}\,\, D \quad \text{and} \quad \|\bBV\|_{W_{0}^{1, p}(D)} \leq C (D, p) \|g\|_{L^p(D)}.
		\ee
		
		In particular,
      for any $g \in L_0^2(\mathcal{D}_R)$,
			the   vector valued function $\bBV = \boldsymbol{\Phi} g \in W_0^{1,2}( \mathcal{D}_R; \mathbb{R}^3)$ satisfies
			\be \nonumber
			\partial_r V^r + \partial_\theta V^\theta +  \partial_z V^z =g \ \   \mbox{in}\,\, \mathcal{D}_R
			\quad
			\text{and}
			\quad
			\|\overline{\nabla } \bBV\|_{L^2(\mathcal{D}_R)}
			\leq C \|g\|_{L^2( \mathcal{D}_R)},
			\ee
			where $\overline{\nabla} = (\partial_r, \ \partial_\theta, \ \partial_z ) $ and $C$ is a constant independent of $R$.
		\end{lemma}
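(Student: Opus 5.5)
The plan has two parts: the general statement, which is the classical Bogovskii construction, and the special statement on $\mathcal{D}_R$, which reduces to the general one on a single fixed box by translation.

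\textbf{General statement.} First take $D$ star-shaped with respect to a ball $\mathscr{B}_\rho(\By_0)$ with $\overline{\mathscr{B}_\rho(\By_0)}\subset D$. Fix $\omega\in C_c^\infty(\mathscr{B}_\rho(\By_0))$ with $\int\omega=1$ and set
\[
\bBV(\Bx)=\int_D g(\By)\,(\Bx-\By)\int_1^\infty \omega\bigl(\By+s(\Bx-\By)\bigr)s^{n-1}\,\dd s\,\dd\By .
\]
The steps are then:
\begin{enumerate}[(a)]
\item Check that $\bBV$ is supported in $D$ and that ${\rm div}\,\bBV=g-\omega\int_D g=g$. The mean-zero condition $g\in L^p_0(D)$ is used exactly here.
\item Differentiate under the integral. $\nabla\bBV$ splits into a bounded multiple of $g$ plus a singular integral whose kernel is homogeneous of degree $-n$ and has zero mean on spheres.
\item Apply Calder\'on--Zygmund theory to get $\|\nabla\bBV\|_{L^p}\le C\|g\|_{L^p}$ for $1<p<\infty$.
\item Get the $L^p$ bound on $\bBV$ from Poincar\'e's inequality, and $W^{1,p}_0$ by density of $C_c^\infty$ data.
\end{enumerate}
For a general bounded Lipschitz domain, cover $D$ by finitely many subdomains $D_1,\dots,D_N$, each star-shaped with respect to a ball. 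Write $g=\sum_i g_i$ with $g_i\in L^p_0(D_i)$ and $\|g_i\|_{L^p}\le C\|g\|_{L^p}$, using the standard chain construction with partition-of-unity corrections that transfer the means along overlaps. Then solve on each piece and sum. This is the textbook argument in \cite[Section III.3]{GAGP11}, and I would quote it rather than redo it.

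\textbf{Special statement on $\mathcal{D}_R$.} Regard $(r,\theta,z)$ simply as Cartesian variables, so that $\mathcal{D}_R=(R-1,R)\times(0,2\pi)\times(0,1)$ is a flat box. Let $D_*=(0,1)\times(0,2\pi)\times(0,1)$, a fixed Lipschitz box. Given $g\in L^2_0(\mathcal{D}_R)$, set $\tilde g(s,\theta,z)=g(s+R-1,\theta,z)$, which lies in $L^2_0(D_*)$. Apply the general statement on $D_*$ with $p=2$ to obtain $\tilde{\bBV}$, and define
\[
\bBV(r,\theta,z)=\tilde{\bBV}(r-R+1,\theta,z).
\]
Translation commutes with $\overline{\nabla}$, so $\partial_rV^r+\partial_\theta V^\theta+\partial_zV^z=g$ and $\bBV\in W^{1,2}_0(\mathcal{D}_R)$. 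Moreover, $\|\overline\nabla\bBV\|_{L^2(\mathcal{D}_R)}=\|\nabla\tilde{\bBV}\|_{L^2(D_*)}\le C(D_*)\|g\|_{L^2(\mathcal{D}_R)}$, and $C(D_*)$ visibly does not depend on $R$.

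\textbf{Main obstacle: the measure.} The only delicate point is which measure the norms and the mean-zero condition on $\mathcal{D}_R$ use.
\begin{itemize}
\item With the flat measure $\dd r\,\dd\theta\,\dd z$, the translation argument above is exact.
\item If instead the physical measure $r\,\dd r\,\dd\theta\,\dd z$ is intended for the $L^2$ norms, then for $R\ge2$ the weight satisfies $R-1\le r\le R$. The flat and weighted $L^2$ norms therefore differ by the factor $\sqrt{R/(R-1)}\le\sqrt2$, so the constant stays uniform in $R$.
\item If the mean-zero condition is also meant with respect to the weighted measure, it does not match the flat divergence identity. In that case I would state the lemma with the flat mean-zero condition, which is the one compatible with $\partial_rV^r+\partial_\theta V^\theta+\partial_zV^z=g$ and the boundary condition $\bBV=0$ on $\partial\mathcal{D}_R$.
\end{itemize}
Finally, the $\theta$-direction is an honest interval $(0,2\pi)$ with $\bBV$ vanishing at $\theta=0,2\pi$. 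Hence $\bBV$ extends periodically and continuously in $\theta$, and it can later be read back as a genuine vector field on $\OR$.
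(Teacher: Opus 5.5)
Your proposal is correct and takes essentially the same route as the paper. The paper gives no argument of its own: it takes the lemma from \cite[Lemma 2.1]{aBGWX} and refers to \cite[Section III.3]{GAGP11} for the classical Bogovskii construction, and the $R$-uniformity is exactly the translation invariance you use, since $\mathcal{D}_R$ is a translate of one fixed box in the flat variables $(r,\theta,z)$.

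The flat-measure reading you single out is the one the paper actually uses later: the mean-zero condition is $\int_{\DR} r v^r\,\dd r\,\dd\theta\,\dd z=0$, and the bound is $\|rv^r\|_{L^2(\DR)}\le CR^{1/2}\|v^r\|_{L^2(\OR)}$. So your weighted-measure caveats are not needed. In your second bullet, note that it is the ratio of the two weight factors, not the norms themselves, that is bounded by $\sqrt{R/(R-1)}$; your conclusion is unaffected.
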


	The Gagliardo-Nirenberg interpolation inequality in bounded domains is as follows.
	\begin{lemma}\label{G-Ninequality}(\hspace{1sp}\cite[Theorem 1.2]{LZ})
		Let $D$ be a bounded Lipschitz domain. Assume that $1 \le q,r \le + \infty$, $k,j \in \mathbb{N}$ with $j <k$, $\frac{j}{k}\le \theta\le 1$ and $p \in \mathbb{R}$ such that 
		\[
		\frac{1}{p}=\frac{j}{n}+\theta\left( \frac{1}{r}-\frac{k}{n}\right)+(1-\theta)\frac{1}{q}.
		\]
		Then there exists a constant C depending only on $n,k,q,r,\theta$ and $D$ such that for any $\Bu \in W^{k,r}(D)\cap L^q(D) $,
		\begin{equation}\label{GN}
			\begin{aligned}
				\|\nabla^j \Bu\|_{L^{p}(D)} \le C\|\nabla^k \Bu\|_{L^r(D)}^{\theta}\|\Bu\|_{L^q(D)}^{1-\theta}+C\|\Bu\|_{L^q(D)},
			\end{aligned}
		\end{equation}
		with the exception that if $1<r<+\infty $ and $k-j-\frac{n}{r} \in \mathbb{N}$, we must take $\frac{j}{k}\le \theta<1 $.
		
	\end{lemma}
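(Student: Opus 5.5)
The statement just before this point is Lemma \ref{G-Ninequality}, the Gagliardo--Nirenberg inequality on bounded Lipschitz domains. It is quoted from \cite[Theorem 1.2]{LZ}, so in the paper it needs no new proof. I would nevertheless justify it by reducing to the whole-space case through an extension operator.

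First, since $D$ is bounded and Lipschitz, Stein's universal extension operator $E$ maps $W^{k,r}(D)$ boundedly into $W^{k,r}(\mathbb{R}^n)$ and $L^q(D)$ boundedly into $L^q(\mathbb{R}^n)$, with the same linear operator for every exponent. Multiplying $Eu$ by a fixed cutoff $\chi\in C_c^\infty$ with $\chi\equiv1$ on $D$ keeps these bounds and makes $w=\chi Eu$ compactly supported. Next I would apply the classical Gagliardo--Nirenberg inequality on $\mathbb{R}^n$ to $w$:
\[
\|\nabla^j w\|_{L^p(\mathbb{R}^n)}\le C\|\nabla^k w\|_{L^r(\mathbb{R}^n)}^{\theta}\|w\|_{L^q(\mathbb{R}^n)}^{1-\theta}.
\]
This holds under the stated relation among $p,q,r,j,k,\theta$, including the exceptional case in which $\theta=1$ must be excluded when $1<r<\infty$ and $k-j-n/r\in\mathbb{N}$.

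Then I would bound $\|\nabla^k w\|_{L^r}\le C\|u\|_{W^{k,r}(D)}$ and $\|w\|_{L^q}\le C\|u\|_{L^q(D)}$. The left-hand side dominates $\|\nabla^j u\|_{L^p(D)}$, so at this stage the right-hand side involves the full $W^{k,r}(D)$ norm. To reduce it to $\|\nabla^k u\|_{L^r(D)}$ I would use the interpolation inequality for intermediate derivatives on bounded Lipschitz domains:
\[
\|u\|_{W^{k,r}(D)}\le C\bigl(\|\nabla^k u\|_{L^r(D)}+\|u\|_{L^1(D)}\bigr)\le C\bigl(\|\nabla^k u\|_{L^r(D)}+\|u\|_{L^q(D)}\bigr),
\]
where the second step uses boundedness of $D$. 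It follows from the Ehrling lemma via Rellich compactness, or equivalently from a Poincaré-type inequality applied after subtracting a polynomial of degree less than $k$. Combining the two bounds with $(a+b)^\theta b^{1-\theta}\le a^\theta b^{1-\theta}+b$ gives \eqref{GN}.

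The main obstacle is this lower-order term. The quantity $\|\nabla^k u\|_{L^r}$ alone does not control $u$ on a bounded domain, because it is blind to polynomials of degree less than $k$. This is exactly why the additive term $C\|u\|_{L^q(D)}$ appears in \eqref{GN}, and the compactness argument has to be arranged so that the constant depends only on $n,k,q,r,\theta$ and $D$.
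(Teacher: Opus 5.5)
Your argument is correct, but the paper gives no proof of this lemma to compare it with: the lemma is quoted from \cite[Theorem 1.2]{LZ} and used as a black box, for instance for the $L^3$ bounds on $\mathcal{D}_R$ in the proof of Proposition~\ref{prop4.1}. Your route is the standard self-contained derivation. It has four steps:
\begin{enumerate}
\item Stein extension followed by a cutoff.
\item The whole-space Gagliardo--Nirenberg inequality, which has exactly the stated scaling relation and exceptional case.
\item The Ehrling-type bound $\|u\|_{W^{k,r}(D)}\le C(\|\nabla^k u\|_{L^r(D)}+\|u\|_{L^q(D)})$.
\item The elementary inequality $(a+b)^\theta\le a^\theta+b^\theta$.
\end{enumerate}
Each step holds for all $1\le q,r\le\infty$ on a bounded Lipschitz domain. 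Stein's operator is bounded on $W^{k,r}$ for every $k\ge 0$, including $k=0$, so one linear operator controls both $W^{k,r}$ and $L^q$. The embedding $W^{k,r}(D)\hookrightarrow W^{k-1,r}(D)$ is compact also for $r=1$ and $r=\infty$, the latter by Arzel\`a--Ascoli.

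Compared with the bare citation, your argument shows where the additive term $C\|u\|_{L^q(D)}$ comes from. It arises only when the inhomogeneous $W^{k,r}$ norm produced by the extension is converted back to $\|\nabla^k u\|_{L^r(D)}$. It also shows that the exceptional case is simply inherited from $\mathbb{R}^n$. The citation, in turn, avoids relying on the fine print of the whole-space theorem.

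That fine print deserves two explicit sentences in your write-up:
\begin{itemize}
\item The whole-space inequality has a second classical exception, $j=0$, $kr<n$, $q=\infty$, where decay at infinity is needed. This is harmless here because $w=\chi Eu$ has compact support.
\item When the formula gives $1/p<0$, the left-hand side must be read as a H\"older seminorm. That seminorm again passes from $\mathbb{R}^n$ to $D$ by restriction.
\end{itemize}
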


    
Next, we will give the following lemma on the interior regularity of the Stokes equations, the proof can be found in \cite[Theorem IV. 4.1]{GAGP11}.
\begin{lemma}\label{interior regularity}
	Assume that $\Omega$ is an arbitrary domain in $\mathbb{R}^n$ with $n\ge 2$. Let $\Bv$ be weakly divergence-free with $\nabla\Bv\in L^q_{loc}{(\Omega)}$,     $1<q<\infty$,  and satisfying 
	\[\int_{\Omega}\nabla\Bv:\nabla\boldsymbol{\varphi}\,\dd \Bx=\int_{\Omega} \Bf\cdot  \boldsymbol{\varphi}  \,\dd \Bx  ~~\text{ for any }\boldsymbol{\varphi}\in C_{c,\sigma}^{\infty}{(\Omega)}.\]
	
	If $\Bf\in W_{loc}^{m,q}{(\Omega)}$ for some $ m\ge 0$, then it follows that $\Bv \in W_{loc}^{m+2,q}{(\Omega)}$, $ p \in W_{loc}^{m+1,q}{(\Omega)}$, where $p$ is the pressure associated to $\Bv$. Further the following inequality holds:
\begin{equation}\label{6-3}
	\|\nabla^{m+2}\Bv\|_{L^{q}{(\Omega')}}+\|\nabla^{m+1}p\|_{L^{q}{(\Omega')}}\le C\left(\|\Bf\|_{W^{m,q}{(\Omega'')}}+\|\Bv\|_{W^{1,q}{(\Omega''\setminus\Omega')}}+\|p\|_{L^{q}{(\Omega''\setminus\Omega')}}\right),
\end{equation}
where $\Omega'$, $\Omega''$ are arbitrary bounded subdomains of $\Omega$ with $\overline{\Omega'}\subset\Omega''$, $\overline{\Omega''}\subset \Omega$,
and $C=C(n,q,m,\Omega',\Omega'')$.
\end{lemma}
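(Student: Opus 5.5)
The plan is the classical localization argument, reducing the interior estimate to the $L^q$ theory of the Stokes system in all of $\mathbb{R}^n$ (or in a smooth bounded domain). First we recover the pressure. The functional $\boldsymbol{\varphi}\mapsto\int_{\Omega}\nabla\Bv:\nabla\boldsymbol{\varphi}\,\dd\Bx-\int_{\Omega}\Bf\cdot\boldsymbol{\varphi}\,\dd\Bx$ vanishes on $C^{\infty}_{c,\sigma}(\Omega)$. By de Rham's theorem, in its quantitative form based on the Bogovskii operator of Lemma \ref{Bogovskii}, there is therefore $p\in L^q_{loc}(\Omega)$ with $-\Delta\Bv+\nabla p=\Bf$ in the sense of distributions. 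Since $p$ is determined only up to a constant, we normalize it on each bounded subdomain. On a Lipschitz subdomain $\Omega''$, the Bogovskii estimate gives
\[
\|p-\bar p\|_{L^q(\Omega'')}\le C\big(\|\nabla\Bv\|_{L^q(\Omega'')}+\|\Bf\|_{W^{-1,q}(\Omega'')}\big).
\]

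Next we localize. Fix $\overline{\Omega'}\subset\Omega''$ and choose a cutoff $\zeta\in C^\infty_c(\Omega'')$ with $\zeta\equiv1$ on $\Omega'$. Set $\Bw=\zeta\Bv$ and $\pi=\zeta p$. Then
\[
-\Delta\Bw+\nabla\pi=\zeta\Bf-2\nabla\zeta\cdot\nabla\Bv-\Bv\Delta\zeta+p\nabla\zeta=:\boldsymbol{F},\qquad \nabla\cdot\Bw=\nabla\zeta\cdot\Bv=:g .
\]
Here $\boldsymbol{F}$ and $g$ are supported in $\Omega''\setminus\Omega'$ except for the term $\zeta\Bf$. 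The defect $g$ has zero mean over $\Omega''$, because $\int\nabla\zeta\cdot\Bv=-\int\zeta\,\nabla\cdot\Bv=0$. We can therefore remove it: subtract $\boldsymbol{\Phi}g$, with $\boldsymbol{\Phi}$ the Bogovskii map on a smooth domain containing $\operatorname{supp}\zeta$, to make the field exactly solenoidal. Alternatively we keep $g$ and use the Stokes estimate with nonzero divergence. Either way, the Calder\'on--Zygmund $L^q$ theory for the Stokes system in $\mathbb{R}^n$ (via the Oseen tensor and the pressure fundamental solution) gives, for the compactly supported $\Bw$,
\[
\|\nabla^2\Bw\|_{L^q}+\|\nabla\pi\|_{L^q}\le C\big(\|\boldsymbol F\|_{L^q}+\|\nabla g\|_{L^q}\big).
\]
This settles $m=0$, except that $\boldsymbol F$ contains $\nabla\Bv$ and $\nabla g$ contains $\nabla^2\Bv$ on the annulus. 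Such terms are a priori only $W^{-1,q}$ and $L^q$ at the first-derivative level, respectively.

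This loss of one derivative is the main obstacle, and I would handle it by a two-step bootstrap. First, apply the negative-order estimate $\|\nabla\Bw\|_{L^q}+\|\pi\|_{L^q}\le C(\|\boldsymbol F\|_{W^{-1,q}}+\|g\|_{L^q})$ on a slightly larger intermediate domain. Then apply the $W^{2,q}$ estimate on a nested family of domains $\Omega'\Subset\Omega_1\Subset\Omega''$, using a second cutoff so that the derivative terms on the annulus are already controlled by the previous step. An alternative that avoids any regularity assumption on $\nabla^2\Bv$ is to work with difference quotients $\Delta_h\Bv$, which satisfy the same Stokes system with force $\Delta_h\Bf$; the $m=0$ estimate then yields uniform bounds and hence $\nabla^2\Bv\in L^q$. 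Collecting the annulus terms gives \eqref{6-3} for $m=0$.

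For general $m\ge1$ we proceed by induction. Derivatives $\partial^\beta\Bv$ and $\partial^\beta p$ with $|\beta|=m$ satisfy the Stokes system with force $\partial^\beta\Bf$. Applying the $m=0$ case on nested domains then bounds $\nabla^{m+2}\Bv$ and $\nabla^{m+1}p$ on $\Omega'$, and the lower-order terms on the intermediate annuli are absorbed by the induction hypothesis. Justifying the differentiation is done again by difference quotients. The constant depends only on $n,q,m$ and the geometry of the cutoffs, that is, on $\Omega'$ and $\Omega''$.
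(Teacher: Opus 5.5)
Your outline is the standard localization proof behind the result the paper quotes; the paper gives no argument of its own and only cites Galdi, Theorem IV.4.1. The steps are the usual ones: recover $p\in L^q_{loc}$, cut off with $\zeta$, apply the whole-space $L^q$ Stokes theory to $(\zeta\Bv,\zeta p)$, and induct on $m$ by differentiating the weak formulation. Your induction step is fine: it uses an intermediate domain and bounds the lower-order annulus terms by the earlier cases.

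However, the ``loss of one derivative'' you single out as the main obstacle is a miscount. Since $g=\nabla\zeta\cdot\Bv$, you have $\partial_j g=\sum_i(\partial_i\partial_j\zeta)v_i+\sum_i\partial_i\zeta\,\partial_j v_i$, which involves only $\Bv$ and $\nabla\Bv$. Likewise $\boldsymbol F=\zeta\Bf-2\nabla\zeta\cdot\nabla\Bv-\Bv\Delta\zeta+p\nabla\zeta$ is already in $L^q$, because $\Bv,\nabla\Bv,p\in L^q_{loc}$. Hence in one line $\|\boldsymbol F\|_{L^q}+\|\nabla g\|_{L^q}\le C(\|\Bf\|_{L^q(\Omega'')}+\|\Bv\|_{W^{1,q}(\Omega''\setminus\Omega')}+\|p\|_{L^q(\Omega''\setminus\Omega')})$. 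This is exactly the right side of \eqref{6-3} for $m=0$, so the nested-domain bootstrap is unnecessary.

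What does need care is qualitative. The Calder\'on--Zygmund bound holds for the potential solution, so you must either identify $\zeta\Bv$ with it (a Liouville-type uniqueness argument in $\mathbb{R}^n$), or know beforehand that $\nabla^2\Bv\in L^q_{loc}$, e.g.\ by mollification. Your difference-quotient route can supply the latter. But the uniform bound on $\Delta_h\Bv$ must come from the first-order estimate $\|\nabla\Bw\|_{L^q}+\|\pi\|_{L^q}\le C(\|\boldsymbol F\|_{W^{-1,q}}+\|g\|_{L^q})$, with $\Delta_h\Bf$ and $\Delta_h p$ measured in $W^{-1,q}$. If by ``the $m=0$ estimate'' you mean the $W^{2,q}$ bound, applying it to $\Delta_h\Bv$ is circular. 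It also only gives uniform control if $\Delta_h\Bf$ is bounded in $L^q$, which fails for $\Bf\in L^q$.

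Finally, normalizing $p$ on a Lipschitz $\Omega''$ is not needed, and $\Omega''$ is not assumed Lipschitz anyway. The term $\|p\|_{L^q(\Omega''\setminus\Omega')}$ stays on the right of \eqref{6-3}; removing it is the content of Remark \ref{qe}, not of this lemma.
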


\begin{remark}\label{qe}
If the domain $\Omega''\setminus\Omega'$, in the previous lemma, satisfies the cone condition, we can remove the term involving the pressure on the right-hand side of \eqref{6-3} by modifying $p$ with a constant. Therefore, we obtain 
\begin{equation}\label{6-5}
	\begin{aligned}
	\|\nabla^{m+2}\Bv\|_{L^{q}{(\Omega')}}+ \|\nabla^{m+1}p\|_{L^{q}{(\Omega')}}\le C\left(\|\Bf\|_{W^{m,q}{(\Omega'')}}+\|\Bv\|_{W^{1,q}{(\Omega''\setminus\Omega')}}\right).
	\end{aligned}
\end{equation}
\end{remark}
  Finally, we will give the following regularity estimates of the solutions to the Stokes equations in the half space  $\mathbb{R}^{3}_{+}$,   which will be used in deal with the decay rate near the boundary. The proof can be found in \cite[Theorem IV. 3.2]{GAGP11}.
	\begin{lemma}\label{bpm}
Assume that $m\ge 0$ and $1<q<\infty$. For every 
	\[
	\Bf\in W^{m,q}{(\R_+^n)}\text{ and } g \in W^{m+1,q}(\R^n_+),
	\]
 there exists a pair of functions $(\Bv, p)$ such that
\[
\Bv \in W^{m+2,q}{(Q)}, \quad p\in W^{m+1,q}{(Q)},
\]
for all open cubes $Q \subset \R_+^n$, solving  the following non-homogeneous Stokes equations
\begin{equation}
	 \left\{\begin{aligned}
	&-\Delta \Bv+\nabla p= \Bf&\text{ in } \R^n_+,\\
	&\nabla \cdot \Bv = g&\text{ in } \R^n_+,\\
	&\Bv= 0& \text{ on } \partial \R^n_+.
	\end{aligned}\right.
\end{equation} 
Moreover, for all $l\in[0,m]$, we have
\begin{equation}\label{bmpeq}
	\begin{aligned}
		\|\nabla^{l+2}\Bv\|_{L^{q}{(\R^n_+)}}+\|\nabla^{l+1}p\|_{L^{q}{(\R^n_+)}}\le C\left( \|\nabla^{l}\Bf\|_{L^{q}{(\R^n_+)}}+ \|\nabla^{l+1}g\|_{L^{q}{(\R^n_+)}} \right),
	\end{aligned}
\end{equation}
where $C=C(n,q,m)$. 
\end{lemma}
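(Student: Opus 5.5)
The statement just above is Lemma \ref{bpm}, the half-space regularity for the nonhomogeneous Stokes system. I would prove it by splitting off the divergence datum, solving the resulting divergence-free problem by an explicit Green-tensor representation, and estimating that representation with singular-integral theory.

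\textbf{Step 1: reduction to $g=0$.} I would use a Bogovskii-type construction adapted to the half space, in the spirit of Lemma \ref{Bogovskii}. Since $\R^n_+$ is unbounded, the mean-zero condition is unnecessary; instead one solves $\nabla\cdot \bBV = g$ with $\bBV=0$ on $\partial\R^n_+$. The construction would be to extend $g$ by odd or even reflection and set $\bBV=\nabla\Delta^{-1}g$. The trace of $\bBV$ is then corrected by subtracting a harmonic-type lift built with the Poisson kernel. This yields the homogeneous bound
\[
\|\nabla^{l+2}\bBV\|_{L^q(\R^n_+)}\le C\|\nabla^{l+1}g\|_{L^q(\R^n_+)}.
\]
The bound follows from Calderón–Zygmund estimates, since the maps $g\mapsto\nabla^2\nabla\Delta^{-1}g$ are of order zero after one derivative is moved onto $g$. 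Setting $\Bw=\Bv-\bBV$, the pair $(\Bw,p)$ solves the Stokes system with force $\Bf+\Delta\bBV$, zero divergence and zero boundary trace. Moreover $\|\nabla^l\Delta\bBV\|_{L^q}\le C\|\nabla^{l+1}g\|_{L^q}$.

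\textbf{Step 2: the case $g=0$.} I would write $(\Bw,p)$ through the Green tensor of the half-space Stokes problem (Lorentz/Odqvist). Equivalently, I would take the tangential Fourier transform in $x'$, solve the resulting ODE system in $x_n$ explicitly, and then apply the Mikhlin multiplier theorem together with Hardy-type bounds in $x_n$. This gives
\[
\|\nabla^2\Bw\|_{L^q(\R^n_+)}+\|\nabla p\|_{L^q(\R^n_+)}\le C\|\Bf\|_{L^q(\R^n_+)} .
\]
Since $\Bw$ is determined only up to the representation, existence comes from defining $\Bw$ by the kernel formula. The local membership $\Bw\in W^{2,q}(Q)$ then follows because the homogeneous estimate controls $\nabla^2\Bw$ globally, and lower-order norms on a cube are recovered via Poincaré-type inequalities after subtracting polynomials that vanish on the boundary.

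\textbf{Step 3: higher derivatives.} Tangential derivatives $\p_{x'}^\beta$ commute with the system and preserve the boundary condition. This gives $\|\nabla\p^\beta_{x'}\nabla\Bw\|_{L^q}$ and the corresponding bound on $p$ in terms of $\|\nabla^{|\beta|}\Bf\|_{L^q}$. The purely normal derivatives are then recovered from the equations themselves: $\p_n w^n=-\nabla'\cdot \Bw'$, $\p_n^2\Bw' = -\Delta'\Bw'+\nabla' p-\Bf'$, and $\p_n p=\Delta w^n+f^n$. Inducting on the number of normal derivatives yields \eqref{bmpeq} for every $l\le m$.

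The main obstacle is Step 2, namely establishing the $L^q$ boundedness of the second derivatives of the half-space Stokes Green tensor. The kernel is not a plain Calderón–Zygmund kernel in $\R^n$, because its boundary-correction part depends on $x_n+y_n$. My first attempt would be to split it into a whole-space CZ part and a reflected part. For the reflected part I would use the bound $|K(x,y)|\le C|x-\tilde y|^{-n}$, where $\tilde y$ is the reflection of $y$. Combined with the tangential Fourier argument, this should give boundedness by the classical Solonnikov estimates.
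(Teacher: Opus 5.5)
Your Step 1 fails at the trace correction. The field $\bBV=\nabla\Delta^{-1}\tilde g$ is a gradient, so in general no choice of extension makes its full trace vanish: that would prescribe both Dirichlet and Neumann data for $\Delta^{-1}\tilde g$. The even reflection only kills $V^n|_{x_n=0}$. The odd reflection is not admissible at all, because the odd extension of $g$ jumps across $x_n=0$ and is not in $W^{1,q}(\R^n)$ unless $g|_{x_n=0}=0$. So a correction $\boldsymbol H$ with $\boldsymbol H|_{x_n=0}=\bBV|_{x_n=0}$ is unavoidable. If $\boldsymbol H$ is a harmonic (Poisson) extension, as you describe, then $\nabla\cdot\boldsymbol H\neq0$ in general. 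Hence $\nabla\cdot(\bBV-\boldsymbol H)=g-\nabla\cdot\boldsymbol H\neq g$, and you never reach the divergence-free, zero-trace problem of Step 2.

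What is missing is a \emph{solenoidal} extension of the boundary values with homogeneous trace-norm bounds. With the even reflection the trace is $(\boldsymbol a,0)$, where $\boldsymbol a=\nabla'\Delta^{-1}\tilde g|_{x_n=0}$. The following choice works, with $P$ the harmonic Poisson extension:
\[
\boldsymbol H=\bigl(P\boldsymbol a+x_n\p_nP\boldsymbol a,\;-x_n\nabla'\cdot P\boldsymbol a\bigr).
\]
It is divergence-free, has trace $(\boldsymbol a,0)$, and satisfies $\|\nabla^{l+2}\boldsymbol H\|_{L^q(\R^n_+)}\le C\|\boldsymbol a\|_{\dot W^{l+2-1/q,q}(\R^{n-1})}\le C\|\nabla^{l+2}\bBV\|_{L^q(\R^n_+)}$. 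A smaller point: for $l\ge1$ the even extension of $g$ is no longer in $W^{l+1,q}(\R^n)$. So estimate the normal derivatives of $\bBV$ through $\Delta(\Delta^{-1}\tilde g)=g$ in $\R^n_+$, not by Calder\'on--Zygmund directly.

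The paper itself gives no proof; it quotes Galdi's book, Theorem IV.3.2. There one extends $\Bf$ and $g$ to $\R^n$ and solves the whole-space Stokes system via the fundamental solution and Calder\'on--Zygmund estimates. The trace of that solution is then removed by the explicit (Odqvist) Poisson-kernel solution of the homogeneous Stokes problem with Dirichlet data. That Poisson-kernel solution would also repair your Step 1. Applying the same extend-and-correct procedure to $\Bf$ makes your Step 2 unnecessary. The only half-space estimate needed is then for a boundary integral operator acting on data on $\R^{n-1}$. You would avoid the $L^q$ theory of second derivatives of the half-space Green tensor, which is the hardest part of your plan and which you only sketch by appeal to Solonnikov.

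Your Step 3 is correct as it stands: tangential derivatives first, then normal derivatives recovered from $\p_nw^n=-\nabla'\cdot\Bw'$, the tangential momentum equations, and $\p_np=\Delta w^n+f^n$.
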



\section{Existence and Asymptotic behavior for solutions in a slab}\label{Sec3}

 This section is devoted to establishing the existence of the finite-energy perturbation $\Bv$ and investigating the pointwise asymptotic behavior around Poiseuille flow  $ \bBW(x_3) = 6F x_3(1-x_3)\Be^{(\alpha)}$  at spatial infinity. By subtracting the homogeneous system for $\bBW$ from the non-homogeneous equations for $\Bu$, the perturbation field $\Bv$ and the associated  pressure $p = P - P_{\bBW}$ satisfy the following  perturbation system:
\begin{equation}\label{eq:pert_v_global}
\begin{cases}
-\Delta \Bv + (\Bv\cdot\nabla)\Bv + (\bBW\cdot\nabla)\Bv + (\Bv\cdot\nabla)\bBW + \nabla p = \Bf, & \text{in } \Omega, \\
\nabla\cdot \Bv = 0, & \text{in } \Omega,
\end{cases}
\end{equation}
supplemented with the  no-slip boundary conditions $\Bv=0$ at $x_3=0, 1$.

\begin{pro}\label{prop4.1}
Assume that the  external force $\Bf\in V^{\prime}$ and  the flux of Poiseuille flow satisfies   $|F| \le \epsilon_0$.   Then there exists a generalized weak solution  $\Bu =\bBW + \Bv$ to the Navier-Stokes equations \eqref{eqsteadynfsf}, where  the perturbation field $\Bv \in H^1(\Omega)$.

Furthermore, if $\Bf$  has a compact support, then there exist  constants $C>0$ and $\alpha>0$ such that for any sufficiently large $R>R_{1}$, the following far-field energy decay estimate holds:
\begin{equation}\label{eq:prop4.1_decay}
\|\Bu-\bBW\|_{H^{1}(\Omega_{>R})}\le CR^{-\alpha},
\end{equation}
where $\Omega_{>R}=\{(x_{1},x_{2})\in\mathbb{R}^{2}:|x^{\prime}|>R\}\times(0,1)$ denotes the exterior domain, and the constants $C$ and $\alpha$ depend only on $\Omega$ and $F$.
\end{pro}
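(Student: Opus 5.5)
We first construct the solution by exhausting the slab with bounded truncated domains. For $\Omega_N=B_N\times(0,1)$ we solve \eqref{eq:pert_v_global} with $\Bv=0$ on $\partial\Omega_N$ by the Leray--Schauder principle. This requires an a priori bound for solutions $\Bv_N\in H^1_0(\Omega_N)$. Testing with $\Bv_N$ itself, two terms vanish: $((\Bv+\bBW)\cdot\nabla)\Bv\cdot\Bv$ integrates to zero by incompressibility and the boundary conditions. The remaining convective term satisfies
\[
\Bigl|\int_{\Omega_N}(\Bv\cdot\nabla)\bBW\cdot\Bv\,\dd\Bx\Bigr|\le \|\partial_3\bBW\|_{L^\infty}\|\Bv\|_{L^2}^2\le 6|F|\,C_P\|\nabla\Bv\|_{L^2}^2 .
\]
Here we use the Poincaré inequality in the $x_3$ variable, $\|\Bv\|_{L^2(\Omega)}\le \pi^{-1}\|\partial_3\Bv\|_{L^2(\Omega)}$, whose constant is independent of $N$. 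Hence for $|F|\le\epsilon_0$ small we get $\|\nabla\Bv_N\|_{L^2}^2\le C\|\Bf\|_{V'}^2$ uniformly in $N$, and $\|\Bv_N\|_{H^1}$ is bounded as well. No smallness of $\Bf$ is needed, which is the point of the proposition. Extending by zero, we extract a weakly convergent subsequence in $H^1$ that converges strongly in $L^2_{loc}$. Passing to the limit in the weak form with test functions in $C^\infty_{c,\sigma}(\Omega)$ gives the weak solution, and the pressure is recovered by de Rham's theorem.

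For the decay, assume $\mathrm{supp}\,\Bf\subset\Omega_{<R_0}$. We derive a Saint-Venant inequality for $Y(R)=\int_{\Omega_{>R}}|\nabla\Bv|^2\,\dd\Bx$ (equivalently, with a cutoff). Take a radial cutoff $\varphi_R(x')$ that vanishes for $|x'|<R-1$, equals $1$ for $|x'|>R$, and has $|\nabla\varphi_R|\le C$. Testing with $\varphi_R\Bv$ is not admissible since it is not divergence-free, so we correct it with the Bogovskii map of Lemma \ref{Bogovskii} on the annulus $\OR$. Set
\[
\boldsymbol{\Psi}=\varphi_R\Bv-\boldsymbol{\Phi}(\nabla\varphi_R\cdot\Bv).
\]
The compatibility condition $\int_{\OR}\nabla\varphi_R\cdot\Bv=0$ holds because $\Bv$ is divergence-free, vanishes at $x_3=0,1$, and has zero net flux through every cylinder $\{|x'|=\rho\}\times(0,1)$ (this flux is independent of $\rho$ and tends to zero since $\Bv\in H^1$). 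The $R$-independent bound in Lemma \ref{Bogovskii}, written in $(r,\theta,z)$ coordinates on $\mathcal D_R$, must be converted to Cartesian derivatives. Since the annulus has circumferential length about $2\pi R$, we may have to subdivide it into $O(R)$ unit cells. We expect
\[
Y(R)\le C\Bigl(|F|\,Y(R-1)+\int_{\OR}|\nabla\Bv|^2+\|\Bv\|_{L^3(\OR)}^3+\|\nabla\Bv\|_{L^2(\OR)}\|\Bv\|_{L^4(\OR)}^2\Bigr).
\]
The cubic terms come from $(\Bv\cdot\nabla)\Bv$ tested against the cutoff and Bogovskii parts, and the pressure drops out because $\boldsymbol\Psi$ is divergence-free. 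Using the $x_3$-Poincaré inequality and Gagliardo--Nirenberg (Lemma \ref{G-Ninequality}) on unit cells, the cubic terms are bounded by $C\,(Y(R-1)-Y(R))^{3/2}$. Since $\Bv\in H^1(\Omega)$, $Y(R-1)-Y(R)\to0$, so for $R$ large the cubic terms are absorbed into a linear term. The result is $Y(R)\le C(Y(R-1)-Y(R))+C|F|Y(R-1)$, that is, $Y(R)\le\theta Y(R-1)$ with $\theta<1$ when $\epsilon_0$ is small. Iterating gives exponential decay, which in particular implies $Y(R)\le CR^{-2\alpha}$. The Poincaré inequality in $x_3$ then transfers the bound to the full $H^1(\Omega_{>R})$ norm.

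The main obstacle is the Bogovskii step on the annulus $\OR$, whose volume grows like $R$. A single Bogovskii operator on the whole annulus gives a constant that is uniform in $R$ only through the angular-variable formulation in Lemma \ref{Bogovskii}, and converting $\partial_\theta$ to $R^{-1}$ times a tangential derivative costs powers of $R$. If this loss is genuine, the recursion becomes $Y(R)\le C R^{\beta}(Y(R-1)-Y(R))+\dots$, which by the usual ODE-type argument still yields algebraic decay $R^{-\alpha}$; this is consistent with the statement claiming only a power rate. Similarly, the Gagliardo--Nirenberg constant on $\OR$ must be controlled by covering the annulus with $O(R)$ unit cubes, which may introduce further powers of $R$ in the cubic terms. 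I would first try to keep every estimate local to unit cells so that the constants stay uniform.
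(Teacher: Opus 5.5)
Your existence argument is the paper's Step 1. Replacing the pressure term by the solenoidal test field $\varphi_R\Bv-\boldsymbol{\Phi}(\nabla\varphi_R\cdot\Bv)$ is not really a different route. The paper writes $rv^r=\partial_r\Psi_R^r+\partial_\theta\Psi_R^\theta+\partial_z\Psi_R^z$ on $\DR$, moves the derivatives onto $p$ and substitutes the momentum equation; that is exactly testing the equation with the Bogovskii field $\bBV$ described below.

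The genuine gap is the $R$-dependence of the Saint-Venant inequality, which is the whole content of Step 2. Your displayed inequality with $R$-independent constants cannot hold in general, and neither can the resulting $Y(R)\le\theta Y(R-1)$ with exponential decay. In a layer, the far field of a flow driven by a compactly supported force is of Hele-Shaw type, $\Bv\approx x_3(1-x_3)\nabla'\Pi(x')$ with $\Pi$ a planar harmonic dipole. So generically $|\Bv|\sim|x'|^{-2}$ and $Y(R)\sim R^{-2}$.

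The obstruction is exactly the Bogovskii step. The function $\nabla\varphi_R\cdot\Bv=v^r$ has zero mean only over the whole annulus, not over unit cells, so your plan to keep every estimate local to unit cells fails there. Mass must be carried around a circle of length $2\pi R$, and the Cartesian Bogovskii constant of $\OR$ really is of order $R$. For instance, for $g=\cos\theta$ the flux of any admissible $\bBV$ through the section $\{\theta=\phi\}$ is $c+(R-\tfrac12)\sin\phi$. This forces $\|\nabla\bBV\|_{L^2(\OR)}\gtrsim R^{3/2}\sim R\|g\|_{L^2(\OR)}$.

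Your fallback does not close this as stated. From $Y(R)\le CR^{\beta}\bigl(Y(R-1)-Y(R)\bigr)$ you only get $\log\bigl(Y(R)/Y(R_1)\bigr)\lesssim-\sum_{R_1<s\le R}s^{-\beta}$. This yields decay only when the sum diverges, i.e. $\beta\le1$; for $\beta>1$ it gives no decay at all. So you must show that the loss is exactly one power of $R$. That is what the paper's computation delivers: $Y(R)\le -C_1RY'(R)$, see \eqref{fianeqes}.

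In your framework this can be done as follows.
\begin{itemize}
\item Apply Lemma \ref{Bogovskii} on $\DR$ to $rv^r$, which has mean zero in $\dd r\,\dd\theta\,\dd z$ by the flux condition.
\item Set $\bBV=r^{-1}\Psi_R^r\Be_r+\Psi_R^\theta\Be_\theta+r^{-1}\Psi_R^z\Be_z$. Then $\nabla\cdot\bBV=v^r$ and $\bBV\in H^1_0(\OR)$.
\item The components $\partial_rV^\theta=\partial_r\Psi_R^\theta$ and $\partial_zV^\theta=\partial_z\Psi_R^\theta$ are not scaled down by $r^{-1}$. Hence $\|\nabla\bBV\|_{L^2(\OR)}\le CR^{1/2}\|rv^r\|_{L^2(\DR)}\le CR\|\Bv\|_{L^2(\OR)}$.
\item Use $\|\bBV\|_{L^6}\le C\|\nabla\bBV\|_{L^2}$, and $\|\Bv\|_{L^3(\OR)}\le C\|\nabla\Bv\|_{L^2(\OR)}$ from unit cells plus the $x_3$-Poincar\'e inequality.
\item Then every term other than $(\Bv\cdot\nabla)\bBW\cdot\varphi_R\Bv$ is bounded by $CR\|\nabla\Bv\|^2_{L^2(\OR)}+CR\|\nabla\Bv\|^3_{L^2(\OR)}$. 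The excluded term is absorbed using $|F|\le\epsilon_0$.
\item The cubic part is absorbed using the global bound from Step 1. This gives $Y(R)\le C_1R\bigl(Y(R-1)-Y(R)\bigr)$, hence algebraic decay $Y(R)\le CR^{-\gamma}$ with $\gamma\sim 1/C_1$.
\end{itemize}
Without this sharp linear-in-$R$ bound, the proposal does not prove the decay estimate.
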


\begin{proof}[Proof for Proposition  \ref{prop4.1}]
The proof is divided into two steps.

\emph{Step 1.} \emph{Existence and a priori bounds.}
We construct the solution to the perturbation system \eqref{eq:pert_v_global} by employing the standard expanding domain method (or Galerkin approximation) in $H^1(\Omega)$. Multiplying the momentum equation in \eqref{eq:pert_v_global} by $\Bv$ and integrating over the  slab $\Omega$, the non-linear convection terms vanish identically due to the divergence-free constraints $\nabla \cdot \Bv = 0$, $\nabla \cdot \bBW = 0$, and the no-slip boundary conditions on $x_3=0,1$. Specifically, we have $\int_{\Omega} (\Bv\cdot\nabla)\Bv \cdot \Bv \, \dd \Bx = 0$ and $\int_{\Omega} (\bBW\cdot\nabla)\Bv \cdot \Bv \, \dd \Bx = 0$. This leads to the global energy identity:
\begin{equation}\label{eq:global_energy_id}
\int_{\Omega} |\nabla \Bv|^2 \, \dd \Bx+ \int_{\Omega} (\Bv\cdot\nabla)\bBW \cdot \Bv  \,\dd \Bx = \int_{\Omega} \Bf \cdot \Bv \, \dd \Bx.
\end{equation}
By applying Hölder's inequality and the Poincaré inequality $\|\Bv\|_{L^2(\Omega)} \le C \|\nabla \Bv\|_{L^2(\Omega)}$, one has
\begin{equation}
\left| \int_{\Omega} (\Bv\cdot\nabla)\bBW \cdot \Bv \, \dd \Bx\right| \le \|\nabla \bBW\|_{L^\infty(\Omega)} \|\Bv\|_{L^2(\Omega)}^2 \le C\|\nabla \bBW\|_{L^\infty(\Omega)} \|\nabla \Bv\|_{L^2(\Omega)}^2.
\end{equation}
Since the flux of Poiseuille flow is small ($|F| \le \epsilon_0$), let  $C \|\nabla \bBW\|_{L^\infty(\Omega)} \le \frac{1}{2}$,  and \eqref{eq:global_energy_id} reduces to:
\begin{equation}
\frac{1}{2} \|\nabla \Bv\|_{L^2(\Omega)}^2 \le \|\Bf\|_{V'} \|\nabla \Bv\|_{L^2(\Omega)} \implies \|\nabla \Bv\|_{L^2(\Omega)} \le 2\|\Bf\|_{V'}.
\end{equation}
Since $\Bf\in V^{\prime}$, this a priori bound ensures the  existence of a weak solution $\Bv \in H^1(\Omega)$. Here one may refer to \cite[Section 2.4]{TT18} or  \cite[Section 5.1]{Ladyzhenskbook} for the expanding domain method.


\emph{Step 2.} \emph{$H^{1}$ decay rate.}
Since the external force $\Bf$ has a compact support in $\Omega$, there exists a constant $R_0 > 0$ such that $\Bf(x) \equiv 0$ for all $|x'| \ge R_0$. For any $R \ge \max\{4, R_0 + 2\}$, we introduce the  cut-off function:
\begin{equation}
\varphi_{R}(r)=\begin{cases}0,&r<R-1,\\ r-R+1,&R-1\le r\le R,\\ 1,&r>R.\end{cases}
\end{equation}
Multiplying the perturbation equation \eqref{eq:pert_v_global} by $\varphi_{R}(r)\Bv$ and integrating by parts over $\Omega$, we obtain
\begin{equation}\label{eq:sv_estimate}
\begin{aligned}
&\int_{\Omega}|\nabla \Bv|^{2}\varphi_{R}\,\dd \Bx+\int_{\Omega}\nabla\varphi_{R}\cdot\nabla \Bv\cdot \Bv\,\dd \Bx-\int_{\Omega}\frac{1}{2}|\Bv|^{2}\Bv\cdot\nabla\varphi_{R}\,\dd \Bx+\int_{\Omega}(\Bv\cdot\nabla)\bBW\cdot(\Bv\varphi_{R})\,\dd \Bx \\
&-\int_{\Omega}\frac{1}{2}|\Bv|^{2}\bBW\cdot\nabla\varphi_{R}\,\dd \Bx-\int_{\Omega}p \Bv\cdot\nabla\varphi_{R}\,\dd \Bx = \int_{\Omega} \Bf \cdot (\Bv \varphi_{R}) \,\dd \Bx.
\end{aligned}
\end{equation}
Since $R-1 > R_0$, the support of the cut-off function $\varphi_{R}$ is strictly disjoint from the compact support of $\Bf$. Consequently, the term involving the external force on the right-hand side identically vanishes: $\int_{\Omega} \Bf \cdot (\Bv \varphi_{R})\, \dd \Bx = 0$.

  Due to  the Poincar\'e inequality, it holds that
  \begin{equation}
  \begin{split}
     \left|\int_{\Omega}(\Bv\cdot\nabla)\bBW\cdot(\varphi_R\Bv)\,\dd \Bx\right|
      &\leq\|\nabla\bBW\|_{L^{\infty}(\Omega)}\int_{\Omega}|\Bv|^{2}\varphi_R\,\dd \Bx\\
      &\leq \|\nabla\bBW\|_{L^{\infty}(\Omega)}\int_{\Omega}|\partial_{z}(\Bv\sqrt{\varphi_R})|^{2}\,\dd \Bx\\
      &\leq \|\nabla\bBW\|_{L^{\infty}(\Omega)} \int_{\Omega}|\nabla\Bv|^{2}\varphi_R\,\dd \Bx.
       \end{split}
  \end{equation}
  This, together with \eqref{eq:sv_estimate} and $\|\nabla\bBW\|_{L^{\infty}(\Omega)}\ll 1$ (since 
 $|F| \le \epsilon_0$) yields
  \begin{equation}\label{initieqf}
 \begin{split}
\int_{\Omega}|\nabla\Bv|^{2}\varphi_R\,\dd \Bx\leq& \left| \int_{\Omega}\nabla\varphi_R \cdot\nabla\Bv\cdot\Bv\,\dd \Bx\right|+\left|\frac{1}{2}\int_{\Omega}|\Bv|^{2}\Bv\cdot\nabla\varphi_R\,\dd \Bx \right|+\left|\frac{1}{2}\int_{\Omega}|\Bv|^{2}\bBW\cdot\nabla\varphi_R\,\dd \Bx \right|\\
&+\left|\int_{\Omega}p\Bv\cdot\nabla\varphi_R\,\dd\Bx\right|.
\end{split}
  \end{equation}
Note that by the Poincar\'e inequality, we have
\begin{equation}\label{estima1}
    \left| \int_{\Omega}\nabla\varphi_R \cdot\nabla\Bv\cdot\Bv\,\dd \Bx\right|\leq C\|\nabla\Bv\|_{L^{2}(\OR)}\|\Bv\|_{L^{2}(\OR)}
    \leq C\|\nabla\Bv\|^{2}_{L^{2}(\OR)}
\end{equation}
and 
\begin{equation}\label{estima2}
    \left|  \frac{1}{2}\int_{\Omega}|\Bv|^{2}\bBW\cdot\nabla\varphi_R\,\dd \Bx  \right|\leq C\|\bBW\|_{L^{\infty}(\OR)} \|\nabla\Bv\|^{2}_{L^{2}(\OR)}.
\end{equation}
    Denote $\overline{\nabla}(v^{r},v^{\theta},v^{z})=(\partial_{r},\partial_{\theta},\partial_{z})(v^{r},v^{\theta},v^{z})$, since
   \begin{equation}\label{prvrpzv}
|\partial_{r}v^{r}|+|\partial_{r}v^{\theta}|+|\partial_{r}v^{z}|+|r^{-1}\partial_{\theta}v^{z}|+|\partial_{z}v^{r}|+|\partial_{z}v^{\theta}|+|\partial_{z}v^{z}|\leq C|\nabla\Bv|
\end{equation}
and
 \begin{equation}\label{pthetav}
|r^{-1}\partial_{\theta}v^{r}-r^{-1}v^{\theta}|+|r^{-1}\partial_{\theta}v^{\theta}+r^{-1}v^{r}|\leq C|\nabla\Bv|,
\end{equation}
it holds that
 \begin{equation}\label{pthethavt}
|r^{-1}\partial_{\theta}v^{r}|+|r^{-1}\partial_{\theta}v^{\theta}|\leq C|\nabla\Bv|+|r^{-1}v^{\theta}|+|r^{-1}v^{r}|.
\end{equation}
By the Gagliardo-Nirenberg inequality and the Poincar\'e inequality, \eqref{prvrpzv} and \eqref{pthethavt}, one has
\begin{equation}\label{estima3}
\begin{split}
    \left|\frac{1}{2}\int_{\Omega}|\Bv|^{2}\Bv\cdot\nabla\varphi_R\,\dd \Bx \right|&\leq CR\int_{0}^{1}\int_{0}^{2\pi}\int_{R-1}^{R}|\Bv|^{3}\,\dd r\dd \theta\dd z\\
    &\leq CR\|(v^{r},v^{\theta},v^{z})\|^{\frac{3}{2}}_{L^{2}(\DR)}\|\overline{\nabla}(v^{r},v^{\theta},v^{z})\|^{\frac{3}{2}}_{L^{2}(\DR)}\\
    &\leq CR\cdot R^{-\frac{3}{4}}\|\Bv\|^{\frac{3}{2}}_{L^{2}(\OR)}
    \left( R^{\frac{1}{2}}\|\nabla\Bv\|_{L^{2}(\OR)}+R^{-\frac{1}{2}} \|\Bv\|_{L^{2}(\OR)}     \right)^{\frac{3}{2}} \\
    &\leq CR\|\nabla\Bv\|^{3}_{L^{2}(\OR)}.
 \end{split}
\end{equation}

Note that
\begin{equation}\label{PrePin}
\left|\int_{\Omega}p\Bv\cdot\nabla\varphi_R\,\dd\Bx\right|=\left|\int_{0}^{1}\int_{0}^{2\pi}\int_{R-1}^{R}pv^{r}r\,\dd r \dd \theta \dd z\right|.
\end{equation}
The  divergence free condition of the three-dimensional Navier-Stokes equations in cylindrical coordinates is
\begin{equation}
    \partial_{r}v^{r}+\frac{v^{r}}{r}+\frac{ \partial_{\theta}v^{\theta}}{r}+ \partial_{z}v^{z}=0.
\end{equation}
Since the no-slip boundary conditions \eqref{noslipboun}, for every fixed $r\geq 0$, one has
\begin{equation}
    \partial_{r}\int_{0}^{1}\int_{0}^{2\pi}
   rv^{r}\,\dd \theta\dd z=-\int_{0}^{1}\int_{0}^{2\pi}\p_{\theta}v^{\theta}+\p_{z}(rv^{z})\,\dd \theta\dd z=0.
\end{equation}
Then it holds that
\begin{equation}
    \int_{0}^{1}\int_{0}^{2\pi}
   rv^{r}\,\dd \theta\dd z=0 \ \ \  \text{and} \ \ \  \int_{0}^{1}\int_{0}^{2\pi}\int_{R-1}^{R}
   rv^{r}\,\dd r \dd \theta\dd z=0.
\end{equation}
By Lemma\,\ref{Bogovskii}, there is a vector valued function $\BP_{R}(r,\theta,z)\in H^{1}_{0}(\DR;\mathbb{R}^{3})$ such that
\begin{equation}
\p_{r}\Psi_{R}^{r}+\p_{\theta}\Psi_{R}^{\theta}+\p_{z}\Psi_{R}^{z}=rv^{r} \ \ \ \text{in}\  \DR
\end{equation}
and 
\begin{equation}\label{eqPrPsies}
    \|\p_{r}\BP_{R}\|_{L^{2}(\DR)}+ \|\p_{\theta}\BP_{R}\|_{L^{2}(\DR)}+
     \|\p_{z}\BP_{R}\|_{L^{2}(\DR)}\leq C\|rv^{r}\|_{L^{2}(\DR)}\leq CR^{\frac{1}{2}}\|v^{r}\|_{L^{2}(\OR)}.
\end{equation}
Thus, from \eqref{PrePin}, one obtains
\begin{equation}
\begin{split}
\left|\int_{\Omega}p\Bv\cdot\nabla\varphi_R\,\dd\Bx\right|
&=\left|\int_{0}^{1}\int_{0}^{2\pi}\int_{R-1}^{R}p(\p_{r}\Psi_{R}^{r}+\p_{\theta}\Psi_{R}^{\theta}+\p_{z}\Psi_{R}^{z})\,\dd r \dd \theta \dd z\right|\\
&=\left|\int_{0}^{1}\int_{0}^{2\pi}\int_{R-1}^{R}(\p_{r}p\Psi_{R}^{r}+\p_{\theta}p\Psi_{R}^{\theta}+\p_{z}p\Psi_{R}^{z})\,\dd r \dd \theta \dd z\right|.
   \end{split} 
\end{equation}
Furthermore, since $\Bf$ has a compact support,  the perturbation equation \eqref{eq:pert_v_global} with no force in cylindrical coordinates is 
	\begin{equation}\label{eqA8}
		\left\{
		\begin{aligned}
			&-\left(\Delta_{r,\theta, z}- \frac{1}{r^2} \right)v^r+ \frac{2}{r^2} \partial_\theta v^\theta+\left(v^r \partial_r+ \frac{v^\theta}{r}  \partial_\theta+v^z \partial_z\right)v^r - \frac{(v^\theta)^2}{r}+ v^{r}\p_{r} W^{r}+W^{r}\p_{r}v^{r}\\
            &\  -\frac{2v^{\theta}W^{\theta}}{r}+\frac{v^{\theta}\partial_{\theta}W^{r}}{r}+v^{z}\partial_{z}W^{r}+\frac{W^{\theta}\partial_{\theta}v^r}{r}+W^{z}\partial_{z}v^{r} + \partial_{r} p= 0,\\
			&-\left(\Delta_{r,\theta, z}- \frac{1}{r^2}\right) v^\theta - \frac{2}{r^2} \partial_\theta v^r+ \left(v^r \partial_r + \frac{v^\theta }{r}  \partial_\theta + v^z \partial_z\right)v^\theta +
			\frac{v^\theta v^r}{r}+v^{r}\p_{r}W^{\theta}+\frac{v^{\theta}\p_{\theta}W^{\theta}}{r}\\
            &\ +\frac{v^{\theta}W^{r}}{r}+v^{z}\p_{z}W^{\theta}+W^{r}\p_{r}v^{\theta}+\frac{W^{\theta}\p_{\theta}v^{\theta}}{r}+\frac{W^{\theta}v^{r}}{r}+W^{z}\p_{z}v^{\theta} +\frac{1}{r} \partial_{\theta} p
			=0,\\
			&-\Delta_{r,\theta, z}v^z+\left(v^r \partial_r + \frac{v^\theta}{r} \partial_\theta +v^z \partial_z\right)v^z+v^{r}\p_{r}W^{z}+\frac{v^{\theta}\p_{\theta}W^{z}}{r}+v^{z}\p_{z}W^{z}+W^{r}\p_{r}v^{z}\\
           &\ +\frac{W^{\theta}\p_{\theta}v^{z}}{r}+W^{z}\p_{z}v^{z}+ \partial_z p =0,
		\end{aligned}
		\right.
	\end{equation}
    where  $\Delta_{r,\theta,z}=\p^{2}_{r}+\frac{1}{r}\p_{r}+\frac{1}{r^{2}}\p^{2}_{\theta}+\p^{2}_{z}$.
  Integrating by parts yields
    \begin{equation}\label{prEstim}
        \begin{split}
    &\int_{0}^{1}\int_{0}^{2\pi}\int_{R-1}^{R}\p_{r}p\Psi_{R}^{r} \,\dd r \dd \theta \dd z  \\
  =&\int_{0}^{1}\int_{0}^{2\pi}\int_{R-1}^{R}\left(\p^{2}_{r}+\frac{1}{r}\p_{r}+\frac{1}{r^{2}}\p^{2}_{\theta}+\p^{2}_{z}-\frac{1}{r^{2}}\right)v^{r}\Psi_{R}^{r} -\frac{2}{r^{2}}\p_{\theta}v^{\theta}\Psi_{R}^{r}\,\dd r \dd \theta \dd z\\
  &-\int_{0}^{1}\int_{0}^{2\pi}\int_{R-1}^{R}\left(v^r \partial_r+ \frac{v^\theta}{r}  \partial_\theta+v^z \partial_z\right)v^r\Psi_{R}^{r}-\frac{(v^{\theta})^{2}}{r}\Psi_{R}^{r}\,\dd r \dd \theta \dd z\\
  &-\int_{0}^{1}\int_{0}^{2\pi}\int_{R-1}^{R}\left(\p_{r}(v^{r} W^{r})-\frac{2v^{\theta}W^{\theta}}{r}+\frac{v^{\theta}\partial_{\theta}W^{r}}{r}+v^{z}\partial_{z}W^{r}+\frac{W^{\theta}\partial_{\theta}v^r}{r}+W^{z}\partial_{z}v^{r}\right)\Psi_{R}^{r}\,\dd r \dd \theta \dd z\\
  =&-\int_{0}^{1}\int_{0}^{2\pi}\int_{R-1}^{R}(\p_{r}v^{r}\p_{r}\Psi_{R}^{r}+\p_{z}v^{r}\p_{z}\Psi_{R}^{r})+\left( \frac{1}{r^{2}}\p_{\theta}v^{r}-\frac{2}{r^{2}}v^{\theta} \right) \p_{\theta}\Psi_{R}^{r}\,\dd r \dd \theta \dd z\\
  &+\int_{0}^{1}\int_{0}^{2\pi}\int_{R-1}^{R}\left(\frac{1}{r}\p_{r}v^{r}-\frac{1}{r^{2}}v^{r}\right)\Psi_{R}^{r}\,\dd r \dd \theta \dd z+\int_{0}^{1}\int_{0}^{2\pi}\int_{R-1}^{R} \frac{(v^{\theta})^{2}}{r}\Psi_{R}^{r}\,\dd r \dd \theta \dd z   \\
  &-\int_{0}^{1}\int_{0}^{2\pi}\int_{R-1}^{R}\left(v^r \partial_r+ \frac{v^\theta}{r}  \partial_\theta+v^z \partial_z\right)v^r\Psi_{R}^{r}\,\dd r \dd \theta \dd z \\
  &-\int_{0}^{1}\int_{0}^{2\pi}\int_{R-1}^{R}\left(\p_{r}(v^{r} W^{r})-\frac{2v^{\theta}W^{\theta}}{r}+\frac{v^{\theta}\partial_{\theta}W^{r}}{r}+v^{z}\partial_{z}W^{r}+\frac{W^{\theta}\partial_{\theta}v^r}{r}+W^{z}\partial_{z}v^{r}\right)\Psi_{R}^{r}\,\dd r \dd \theta \dd z,
       \end{split}
    \end{equation}
    
  \begin{equation}\label{pthetaes}
        \begin{split}
    &\int_{0}^{1}\int_{0}^{2\pi}\int_{R-1}^{R}\p_{\theta}p\Psi_{R}^{\theta} \,\dd r \dd \theta \dd z  \\
    =&-\int_{0}^{1}\int_{0}^{2\pi}\int_{R-1}^{R}\left[r(\p_{r}v^{\theta}\p_{r} \Psi_{R}^{\theta}+\p_{z}v^{\theta}\p_{z}\Psi_{R}^{\theta})+r^{-1}(\p_{\theta}v^{\theta}\p_{\theta}\Psi_{R}^{\theta}+2v^{r}\p_{\theta}\Psi_{R}^{\theta})\right]\,\dd r \dd \theta \dd z \\
    &-\int_{0}^{1}\int_{0}^{2\pi}\int_{R-1}^{R}r^{-1}v^{\theta}\Psi_{R}^{\theta}\,\dd r \dd \theta \dd z- \int_{0}^{1}\int_{0}^{2\pi}\int_{R-1}^{R}v^{\theta}v^{r}\Psi_{R}^{\theta}\,\dd r \dd \theta \dd z\\
    &-\int_{0}^{1}\int_{0}^{2\pi}\int_{R-1}^{R}r\left(v^r \partial_r + \frac{v^\theta }{r}  \partial_\theta + v^z \partial_z\right)v^\theta\Psi_{R}^{\theta}\,\dd r \dd \theta \dd z-\int_{0}^{1}\int_{0}^{2\pi}\int_{R-1}^{R}rv^{r}\p_{r}W^{\theta}\Psi_{R}^{\theta}\,\dd r \dd \theta \dd z\\
    &-\int_{0}^{1}\int_{0}^{2\pi}\int_{R-1}^{R}(v^{\theta}\p_{\theta}W^{\theta}+W^{r}v^{\theta}+rv^{z}\p_{z}W^{\theta}+rW^{r}\p_{r}v^{\theta}+W^{\theta}\p_{\theta}v^{\theta}+W^{\theta}v^{r}+rW^{z}\p_{z}v^{\theta})\Psi_{R}^{\theta}\,\dd r \dd \theta \dd z
 \end{split}
    \end{equation}
    and
    \begin{equation}\label{prepz}
        \begin{split}
    &\int_{0}^{1}\int_{0}^{2\pi}\int_{R-1}^{R}\p_{z}p\Psi_{R}^{z} \,\dd r \dd \theta \dd z  \\
   =&-\int_{0}^{1}\int_{0}^{2\pi}\int_{R-1}^{R}(\p_{r}v^{z}\p_{r} \Psi_{R}^{z}+\p_{z}v^{z}\p_{z} \Psi_{R}^{z})-(r^{-1}\p_{r}v^{z}\Psi_{R}^{z}-r^{-2}\p_{\theta}v^{z}\p_{\theta} \Psi_{R}^{z})\,\dd r\dd\theta \dd z \\
   &-\int_{0}^{1}\int_{0}^{2\pi}\int_{R-1}^{R}\left(v^r \partial_r+ \frac{v^\theta}{r}  \partial_\theta+v^z \partial_z\right)v^z\Psi_{R}^{z}\,\dd r \dd \theta \dd z-\int_{0}^{1}\int_{0}^{2\pi}\int_{R-1}^{R}v^{r}\p_{r}W^{z}\Psi_{R}^{z}\,\dd r \dd \theta \dd z\\
   &-\int_{0}^{1}\int_{0}^{2\pi}\int_{R-1}^{R}\left(\frac{v^{\theta}\p_{\theta}W^{z}}{r}+v^{z}\p_{z}W^{z}+W^{r}\p_{r}v^{z}+\frac{W^{\theta}\p_{\theta}v^{z}}{r}+W^{z}\p_{z}v^{z}\right)\Psi_{R}^{z}\,\dd r \dd \theta \dd z.
    \end{split}
    \end{equation}
  Now we are ready to estimate the right hand side of   \eqref{prEstim}-\eqref{prepz}. For the right hand side of \eqref{prEstim}, by \eqref{eqPrPsies} and the Poincar\'e inequality,
  one has
  \begin{equation}\label{prvrpsir}
  \begin{split}
&\left|\int_{0}^{1}\int_{0}^{2\pi}\int_{R-1}^{R} (\p_{r}v^{r}\p_{r}\Psi_{R}^{r}+\p_{z}v^{r}\p_{z}\Psi_{R}^{r}) \,\dd r \dd \theta \dd z\right|\\
\leq&\, C\|(\p_{r}v^{r},\p_{z}v^{r})\|_{L^{2}(\DR)}\|(\p_{r}\Psi_{R}^{r},\p_{z}\Psi_{R}^{r})\|_{L^{2}(\DR)}\\
\leq &\,CR^{-\frac{1}{2}}\|\nabla\Bv\|_{L^{2}(\OR)}\cdot R^{\frac{1}{2}}\|v^{r}\|_{L^{2}(\OR)}\\
\leq&\, C\|\nabla\Bv\|^{2}_{L^{2}(\OR)}
\end{split}
  \end{equation}
  and 
\begin{equation}
\begin{split}
&\left|\int_{0}^{1}\int_{0}^{2\pi}\int_{R-1}^{R} \left(\frac{1}{r^{2}}\p_{\theta}v^{r}-\frac{2}{r^{2}}v^{\theta}\right)\p_{\theta}\Psi_{R}^{r}\,\dd r \dd \theta \dd z\right|\\
 \leq&\,  C(\|r^{-2}(\p_{\theta}v^{r}-v^{\theta})\|_{L^{2}(\DR)}+\|r^{-2}v^{\theta}\|_{L^{2}(\DR)})\cdot R^{\frac{1}{2}}\|v^{r}\|_{L^{2}(\OR)}\\
  \leq&\, C(R^{-\frac{3}{2}}\|\nabla\Bv\|_{L^{2}(\OR)}+R^{-\frac{5}{2}}\|\nabla\Bv\|_{L^{2}(\OR)})\cdot R^{\frac{1}{2}}\|\nabla\Bv\|_{L^{2}(\OR)}\\
   \leq&\, C R^{-1}\|\nabla\Bv\|^{2}_{L^{2}(\OR)},
 \end{split}
\end{equation}
where the third line is due to \eqref{pthetav}. Furthermore, by the Gagliardo-Nirenberg inequality, one has 
\begin{equation}
    \begin{split}
        &\left|\int_{0}^{1}\int_{0}^{2\pi}\int_{R-1}^{R}\left(\frac{1}{r}\p_{r}v^{r}-\frac{1}{r^{2}}v^{r}\right)\Psi_{R}^{r}\,\dd r \dd \theta \dd z\right|\\
        \leq&\,CR^{-1}\|\nabla\Bv\|_{L^{2}(\DR)} \|\p_{z}\Psi_{R}^{r}\|_{L^{2}(\DR)}\\
        \leq&\,CR^{-\frac{3}{2}}\|\nabla\Bv\|_{L^{2}(\OR)} \cdot R^{\frac{1}{2}}\|v^{r}\|_{L^{2}(\OR)}\\
       \leq&\,CR^{-1} \|\nabla\Bv\|^{2}_{L^{2}(\OR)}, 
    \end{split}
\end{equation}
\begin{equation}\label{completer}
    \begin{split}
&\left|\int_{0}^{1}\int_{0}^{2\pi}\int_{R-1}^{R}\left[\left(v^r \partial_r+ \frac{v^\theta}{r}  \partial_\theta+v^z \partial_z\right)v^r-\frac{(v^{\theta})^{2}}{r}\right]\Psi_{R}^{r}\,\dd r \dd \theta \dd z\right|\\
\leq&\,C\|(v^{r},v^{\theta},v^{z})\|_{L^{3}(\DR)}(\|(\p_{r},\p_{z})v^{r}\|_{L^{2}(\DR)}+\|r^{-1}(\p_{\theta}v^{r}-v^{\theta}\|_{L^{2}(\DR)})\|\Psi_{R}^{r}\|_{L^{6}(\DR)}\\
\leq&\,C\|\Bv\|^{\frac{1}{2}}_{L^{2}(\DR)}(\|\Bv\|^{\frac{1}{2}}_{L^{2}(\DR)}+\|\overline{\nabla}\Bv\|^{\frac{1}{2}}_{L^{2}(\DR)})\cdot
R^{-\frac{1}{2}}\|\nabla\Bv\|_{L^{2}(\OR)}\|\overline{\nabla}\Psi_{R}^{r}\|_{L^{2}(\DR)}\\
\leq&\,CR^{-\frac{1}{4}}\|\nabla\Bv\|^{\frac{1}{2}}_{L^{2}(\OR)}(R^{\frac{1}{4}}\|\nabla\Bv\|^{\frac{1}{2}}_{L^{2}(\OR)}+R^{-\frac{1}{4}}\|\Bv\|^{\frac{1}{2}}_{L^{2}(\OR)})\cdot R^{-\frac{1}{2}}\|\nabla\Bv\|_{L^{2}(\OR)}\cdot R^{\frac{1}{2}}\|\nabla\Bv\|_{L^{2}(\OR)}\\
\leq&\,C\|\nabla\Bv\|^{3}_{L^{2}(\OR)}
\end{split}
\end{equation}
and
\begin{equation}\label{estimatesmall}
    \begin{split}
&\left|\int_{0}^{1}\int_{0}^{2\pi}\int_{R-1}^{R}\left(\p_{r}(v^{r} W^{r})-\frac{2v^{\theta}W^{\theta}}{r}+\frac{v^{\theta}\partial_{\theta}W^{r}}{r}+v^{z}\partial_{z}W^{r}+\frac{W^{\theta}\partial_{\theta}v^r}{r}+W^{z}\partial_{z}v^{r}\right)\Psi_{R}^{r}\,\dd r \dd \theta \dd z\right|\\
&\leq C\|\bBW\|_{W^{1,\infty}(\OR)}(\|(\p_{r},\p_{z})\Bv\|_{L^{2}(\DR)}+\|r^{-1}(\p_{\theta}v^{r}-v^{\theta})\|_{L^{2}(\DR)})\|\p_{z}\Psi_{R}^{r}\|_{L^{2}(\DR)}\\
&\leq C\|\bBW\|_{W^{1,\infty}(\OR)} R^{-\frac{1}{2}}\|\nabla\Bv\|_{L^{2}(\OR)}\cdot
R^{\frac{1}{2}}\|v^{r}\|_{L^{2}(\OR)}\\
&\leq C\|\nabla\Bv\|^{2}_{L^{2}(\OR)},
  \end{split}
\end{equation}
where the last inequality is due to $\|\bBW\|_{W^{1,\infty}(\OR)}\leq C$.
Combining \eqref{prvrpsir}-\eqref{estimatesmall}, it holds that
\begin{equation}\label{estima4}
   \left| \int_{0}^{1}\int_{0}^{2\pi}\int_{R-1}^{R}\p_{r}P\Psi_{R}^{r} \,\dd r \dd \theta \dd z\right|\leq C\|\nabla\Bv\|^{2}_{L^{2}(\OR)}+C\|\nabla\Bv\|^{3}_{L^{2}(\OR)}.
\end{equation}
As for the right-hand side of \eqref{pthetaes}, using \eqref{eqPrPsies},
\eqref{prvrpzv}-\eqref{pthetav}, one has
\begin{equation}\label{thetater}
    \begin{split}
        &\left|\int_{0}^{1}\int_{0}^{2\pi}\int_{R-1}^{R} r(\p_{r}v^{\theta}\p_{r}\Psi_{R}^{\theta}+\p_{z}v^{\theta}\p_{z}\Psi_{R}^{\theta}) \,\dd r \dd \theta \dd z\right|\\
     \leq&\, CR\|(\p_{r}v^{\theta},\p_{z}v^{\theta})\|_{L^{2}(\DR)}\|(\p_{r}\Psi_{R}^{\theta},\p_{z}\Psi_{R}^{\theta})\|_{L^{2}(\DR)}\\
    \leq &\,CR^{\frac{1}{2}}\|\nabla\Bv\|_{L^{2}(\OR)}\cdot R^{\frac{1}{2}}\|v^{r}\|_{L^{2}(\OR)}\\
\leq&\, CR\|\nabla\Bv\|^{2}_{L^{2}(\OR)},
    \end{split}
\end{equation}
\begin{equation}
    \begin{split}
    &\left| \int_{0}^{1}\int_{0}^{2\pi}\int_{R-1}^{R}r^{-1}(\p_{\theta}v^{\theta}\p_{\theta}\Psi_{R}^{\theta}+2v^{r}\p_{\theta}\Psi_{R}^{\theta})\,\dd r \dd \theta \dd z\right|\\
  \leq&\,C(\|r^{-1}(\p_{\theta}v^{\theta}+v^{r})\|_{L^{2}(\DR)}+R^{-1}\|v^{r}\|_{L^{2}(\DR)})\|\p_{\theta}\Psi_{R}^{\theta}\|_{L^{2}(\DR)}\\
  \leq&\,CR^{-\frac{1}{2}}\|\nabla\Bv\|_{L^{2}(\OR)}\cdot R^{\frac{1}{2}}\|v^{r}\|_{L^{2}(\OR)}\\
   \leq&\,C\|\nabla\Bv\|^{2}_{L^{2}(\OR)}
       \end{split}
\end{equation}
and
\begin{equation}
    \begin{split}
 &\left|\int_{0}^{1}\int_{0}^{2\pi}\int_{R-1}^{R}r^{-1}v^{\theta}\Psi_{R}^{\theta}\,\dd r \dd \theta \dd z \right|\\
 \leq&\,CR^{-1}\|\p_{z}v^{\theta}\|_{L^{2}(\DR)}\|\p_{z}\Psi_{R}^{\theta}\|_{L^{2}(\DR)}\\
 \leq&\,CR^{-\frac{3}{2}}\|\nabla\Bv\|_{L^{2}(\OR)}\cdot R^{\frac{1}{2}}\|v^{r}\|_{L^{2}(\OR)}\\
\leq&\,CR^{-1} \|\nabla\Bv\|^{2}_{L^{2}(\OR)}.
   \end{split} 
\end{equation}
Moreover, similar to \eqref{completer} and \eqref{estimatesmall},  one obtains
\begin{equation}
\begin{split}
&\left|\int_{0}^{1}\int_{0}^{2\pi}\int_{R-1}^{R}\left[r\left(v^r \partial_r+ \frac{v^\theta}{r}  \partial_\theta+v^z \partial_z\right)v^{\theta}+v^{\theta}v^{r}\right]\Psi_{R}^{\theta}\,\dd r \dd \theta \dd z\right|\\
\leq&\,CR\|(v^{r},v^{\theta},v^{z})\|_{L^{3}(\DR)}(\|(\p_{r},\p_{z})v^{\theta}\|_{L^{2}(\DR)}+\|r^{-1}(\p_{\theta}v^{\theta}+v^{r})\|_{L^{2}(\DR)})\|\Psi_{R}^{\theta}\|_{L^{6}(\DR)}\\
\leq&\,CR\|\nabla\Bv\|^{3}_{L^{2}(\OR)},
\end{split}
\end{equation}

\begin{equation}
\begin{split}
&\left|\int_{0}^{1}\int_{0}^{2\pi}\int_{R-1}^{R}(rv^{r}\p_{r}W^{\theta}+v^{\theta}\p_{\theta}W^{\theta}+W^{r}v^{\theta}+rv^{z}\p_{z}W^{\theta})\Psi_{R}^{\theta}\,\dd r \dd \theta \dd z\right|\\
\leq&\,CR\|\bBW\|_{W^{1,\infty}(\OR)}\|\p_{z}(v^{r},v^{\theta},v^{z})\|_{L^{2}(\DR)}\|\p_{z}\Psi_{R}^{\theta}\|_{L^{2}(\DR)}\\
\leq&\,CR\|\bBW\|_{W^{1,\infty}(\OR)}\|\nabla\Bv\|_{L^{2}(\OR)}\|v^{r}\|_{L^{2}(\OR)}\\
\leq&\,CR\|\nabla\Bv\|^{2}_{L^{2}(\OR)}
\end{split}
\end{equation}
and
\begin{equation}\label{thetafina}
    \begin{split}
 &\left|\int_{0}^{1}\int_{0}^{2\pi}\int_{R-1}^{R}(rW^{r}\p_{r}v^{\theta}+W^{\theta}\p_{\theta}v^{\theta}+W^{\theta}v^{r}+rW^{z}\p_{z}v^{\theta})\Psi_{R}^{\theta}\,\dd r \dd \theta \dd z\right|\\  
 \leq&\,CR\|\bBW\|_{L^{\infty}(\OR)}(\|(\p_{r},\p_{z})v^{\theta}\|_{L^{2}(\DR)}+\|r^{-1}(\p_{\theta}v^{\theta}+v^{r})\|_{L^{2}(\DR)})\|\p_{z}\Psi_{R}^{\theta}\|_{L^{2}(\DR)}\\
 \leq&\,CR\|\bBW\|_{L^{\infty}(\OR)}\|\nabla\Bv\|_{L^{2}(\OR)}\|v^{r}\|_{L^{2}(\OR)} \\
 \leq&\,CR\|\nabla\Bv\|^{2}_{L^{2}(\OR)}.
    \end{split}
\end{equation}
Collecting the estimates \eqref{thetater}-\eqref{thetafina} gives
\begin{equation}\label{estima5}
 \left| \int_{0}^{1}\int_{0}^{2\pi}\int_{R-1}^{R}\p_{\theta}P\Psi_{R}^{\theta} \,\dd r \dd \theta \dd z\right|\leq CR\|\nabla\Bv\|^{2}_{L^{2}(\OR)}+CR\|\nabla\Bv\|^{3}_{L^{2}(\OR)}.
\end{equation}
Similarly, it can be shown  that
\begin{equation}\label{estima6}
\left|\int_{0}^{1}\int_{0}^{2\pi}\int_{R-1}^{R}\p_{z}P\Psi_{R}^{z} \,\dd r \dd \theta \dd z\right|\leq C\|\nabla\Bv\|^{2}_{L^{2}(\OR)}+C\|\nabla\Bv\|^{3}_{L^{2}(\OR)}.
\end{equation}
 From the estimate \eqref{initieqf}-\eqref{estima2}, \eqref{estima3}, \eqref{estima4}, \eqref{estima5}-\eqref{estima6}, one arrives at 
 \begin{equation}\label{fianeqes}
\int_{\Omega}|\nabla\Bv|^{2}\varphi_R\,\dd \Bx\leq CR\|\nabla\Bv\|^{2}_{L^{2}(\OR)}+CR\|\nabla\Bv\|^{3}_{L^{2}(\OR)} \le C_1 R\|\nabla\Bv\|^{2}_{L^{2}(\OR)},
 \end{equation}
 where in  the last inequality we have used  the  bound $\Bv \in H^1(\Omega)$, where established in Step 1.
 
 Let 
 \[
 Y(R)=\int_{0}^{1}\iint_{\mathbb{R}^{2}}|\nabla\Bv|^{2}\varphi_R\left(\sqrt{x^{2}_{1}+x^{2}_{2}}\right)\,\dd x_{1} \dd x_{2} \dd x_{3}.
 \]
The straightforward computations give
 \[
 Y^{\prime}(R)=-\int_{0}^{1}\int_{0}^{2\pi}\int_{R-1}^{R}|\nabla\Bv|^{2}r\,\dd r \dd \theta \dd z=-\int_{\OR}|\nabla\Bv|^{2}\,\dd \Bx.
 \]
 Hence \eqref{fianeqes} can be written as
 \begin{equation}
     Y(R)\leq -C_1RY^{\prime}(R).
 \end{equation}

For every fixed large $R_{1}$, integrating this inequality over $[R_{1},R]$ yields
\begin{equation}
 Y(R)\leq R_{1}^{\frac{1}{C_{1}}}Y(R_{1})R^{-\frac{1}{C_{1}}}
 \leq C_{2}R^{-\frac{1}{C_{1}}}.
\end{equation}		
Combining the Poincar\'e inequality	yields 
\begin{equation}
    \|\Bu-\bBW\|_{H^1(\Omega_{>R})} = \|\Bv\|_{H^1(\Omega_{>R})} \le C[Y(R)]^{1/2} \le C_2^{\frac{1}{2}} R^{-\frac{1}{2C_1}}.
\end{equation}
Taking $\alpha=\frac{1}{2C_1}>0$, we arrive at the desired estimate \eqref{eq:prop4.1_decay}. This completes the proof of Proposition \ref{prop4.1}.
	\end{proof}

We now proceed to improve the regularity of the perturbation field $\Bv$ to obtain the pointwise convergence rate.

\begin{pro}\label{prop4.2}
Let $\Bu = \bBW + \Bv$ be the  generalized weak solution obtained in Proposition \ref{prop4.1}. Then  the perturbation field $\Bv$ satisfies the following pointwise decay estimate in the far field:
\begin{equation}\label{eq:prop4.2_decay}
|\Bu(\Bx) - \bBW(x_3)| \le C |x'|^{-\alpha}, \quad \forall \Bx= (x_1, x_2, x_3) \in \Omega \text{ with } |x'| \ge R_1,
\end{equation}
where $x'=(x_1, x_2)$, and the  constant $C > 0$ depends only on $\Omega$ and $F$.
\end{pro}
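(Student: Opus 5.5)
The plan is to upgrade the $H^1$ decay of Proposition \ref{prop4.1} to an $L^\infty$ bound by local Stokes regularity on unit-size pieces of the slab, followed by a Sobolev embedding. Fix $\Bx_0=(x_0',x_{0,3})$ with $|x_0'|=R\ge R_1+2$. Since $\Bf$ vanishes for $|x'|\ge R_0$, the perturbation $\Bv$ solves, on the cylinder $Z=\{|x'-x_0'|<2\}\times(0,1)\subset\Omega_{>R-2}$, the Stokes system
\[
-\Delta\Bv+\nabla p=\boldsymbol{g}:=-(\Bv\cdot\nabla)\Bv-(\bBW\cdot\nabla)\Bv-(\Bv\cdot\nabla)\bBW,\qquad \nabla\cdot\Bv=0,
\]
with $\Bv=0$ on $x_3\in\{0,1\}$. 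Proposition \ref{prop4.1} gives $\|\Bv\|_{H^1(Z)}\le C(R-2)^{-\alpha}\le CR^{-\alpha}$, and the constant is uniform in $x_0'$ because all translates of $Z$ are congruent.

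The first step is to bound $\boldsymbol{g}$. Using $\bBW\in W^{1,\infty}$ and $H^1\subset L^6$ on $Z$, we have $\|\boldsymbol{g}\|_{L^{3/2}(Z)}\le \|\Bv\|_{L^6}\|\nabla\Bv\|_{L^2}+C\|\Bv\|_{H^1}\le CR^{-\alpha}$, with the quadratic term bounded using the global bound $\|\nabla\Bv\|_{L^2(\Omega)}\le 2\|\Bf\|_{V'}$.

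Next we bootstrap. We localize with a cutoff $\chi(x')$ supported in $Z$ and equal to $1$ on a smaller cylinder, which makes the problem flat-boundary near $x_3=0$ and $x_3=1$. Away from the boundary we apply Lemma \ref{interior regularity} with Remark \ref{qe}. Near each boundary plane we flatten to the half space and apply Lemma \ref{bpm} to $(\chi\Bv,\chi p)$. The commutator terms are $\nabla\chi\cdot\nabla\Bv$, $\Delta\chi\,\Bv$ and $p\nabla\chi$, and the divergence datum is $g=\nabla\chi\cdot\Bv$. The pressure is normalized by subtracting its mean on the annular region, and it is controlled in $L^2$ by the Bogovskii map of Lemma \ref{Bogovskii}: $\|p-\bar p\|_{L^2}\le C(\|\nabla\Bv\|_{L^2}+\|\boldsymbol{g}\|_{H^{-1}})\le CR^{-\alpha}$.

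With these bounds, one step gives $\Bv\in W^{2,3/2}$ on the smaller cylinder, hence $\nabla\Bv\in L^3$ and $\Bv\in L^\infty$ up to small losses. We then recompute $\boldsymbol{g}\in L^2$, where each term is $\lesssim R^{-\alpha}$ times bounded quantities. A second step gives $\Bv\in H^2$, and $H^2\subset L^\infty$ in three dimensions. Each step shrinks the cylinder by a fixed amount, so after finitely many steps we obtain $\|\Bv\|_{L^\infty(\{|x'-x_0'|<1/2\}\times(0,1))}\le C\|\Bv\|_{H^2}\le CR^{-\alpha}$. This is the desired bound since $\Bu-\bBW=\Bv$.

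The main obstacle is keeping every estimate linear in $R^{-\alpha}$ despite the nonlinearity and the pressure. The nonlinear terms must be split so that one factor carries the decaying local norm and the other only a uniformly bounded norm, either the global $\|\nabla\Bv\|_{L^2(\Omega)}$ or the $W^{1,3/2}$ or $L^\infty$ bound from the previous step. I would first check that this gives linear dependence. If it does not, I would accept a smaller exponent $\alpha'$ from interpolation, which still gives algebraic decay.

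A second difficulty is that the pressure is only determined up to a constant on each patch. Lemma \ref{bpm} and Remark \ref{qe} only require the pressure up to a constant, so the normalized pressure estimate above suffices. Finally, the flux limit \eqref{eq:flux_limit} follows by integrating the pointwise bound in $x_3$, which gives $\big|\int_0^1\Bu\cdot\Be^{(\alpha)}\,\dd x_3-F\big|\le CR^{-\alpha}$.
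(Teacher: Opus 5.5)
Your proposal is correct and follows essentially the same route as the paper: local Stokes regularity (Lemma \ref{interior regularity} in the interior, Lemma \ref{bpm} near the flat boundary) is bootstrapped from the $H^1$ decay of Proposition \ref{prop4.1}, and the global bound on $\|\nabla\Bv\|_{L^2(\Omega)}$ keeps the nonlinear term linear in $R^{-\alpha}$, followed by a Sobolev embedding into $L^\infty$. The differences are cosmetic: you localize on full-height cylinders with the exponent chain $L^{3/2}\to W^{2,3/2}\to H^2$, while the paper uses balls with the chain $W^{2,30/23}\to L^{10}\to W^{2,5/3}$; and your explicit Bogovskii/Ne\v{c}as normalization of the pressure fills in a step that the paper only asserts by appeal to Remark \ref{qe}.
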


\begin{proof}[Proof for Proposition  \ref{prop4.2}]
The proof relies on the  elliptic regularity theory for the Stokes equations. We employ a bootstrap argument to upgrade the control of the perturbation $\Bv$ from the $H^1$-norm to the $L^\infty$-norm. For any far-field point $x^* = (x_1^*, x_2^*, x_3^*) \in \Omega$ with $R = |x'^*| \ge R_1$, we divide the analysis into two cases based on its distance to the boundary $\partial\Omega$.

\emph{Step 1.} \emph{Interior Estimates.}
Assume that $\text{dist}(x^*, \partial\Omega) \ge 1/4$. We consider two fixed concentric balls centered at $x^*$, denoted as $B_{1/8}(x^*) \subset B_{1/4}(x^*) \subset \Omega$. For sufficiently large $R_1$, the larger ball $B_{1/4}(x^*)$ lies  within the far-field exterior domain $\Omega_{>R/2}=\{(x_{1},x_{2})\in\mathbb{R}^{2}:|x^{\prime}|>R/2\}\times(0,1)$. 

Applying Lemma \ref{interior regularity} with $\Omega'=B_{1/8}(x^*)$ and $ \Omega''=B_{1/6}(x^*) $. In particular, taking $q = \frac{5}{3}$ and $m = 0$ in \eqref{6-5} and using Sobolev embedding inequality, we have
\begin{equation}\label{eq:interior_stokes}
    \begin{split}  
    &\|\Bv\|_{L^{\infty}(B_{1/8}(x^*))}   \\
\le &\, \|\Bv\|_{W^{2,\frac{5}{3}}(B_{1/8}(x^*))}\\
\le&\, C \left( \|\Bv \cdot \nabla \Bv + \bBW \cdot \nabla \Bv + \Bv \cdot \nabla \bBW\|_{L^{\frac{5}{3}}(B_{1/6}(x^*))} + \|\Bv\|_{W^{1,\frac{5}{3}}(B_{1/6}(x^*))} \right) \\
\le &\,C \left( \|\Bv\|_{L^{10}(B_{1/6}(x^*))} \|\nabla \Bv\|_{L^2(B_{1/6}(x^*))} + (\|\bBW\|_{W^{1,\infty}(\Omega)} + 1) \|\Bv\|_{W^{1,2}(B_{1/6}(x^*))} \right).
 \end{split}
\end{equation}
To further bound the remaining localized $L^{10}$-norm, we shift the integration exponents and expand the subdomains to a wider radius by  Lemma \ref{interior regularity} once again, which yields
    \begin{equation}\label{eqL10ne2}
        \begin{split}
       &\|\Bv\|_{L^{10}(B_{1/6}(x^*))}\\ \leq\,& C\|\Bv\|_{W^{2,\frac{30}{23}}(B_{1/4}(x^*))} \\
       \leq \,&  C\left( \|\Bv \cdot \nabla \Bv + \bBW \cdot \nabla \Bv + \Bv \cdot \nabla \bBW\|_{L^{\frac{30}{23}}(B_{1/4}(x^*))}+\|\Bv\|_{W^{1,\frac{30}{23}}(B_{1/4}(x^*))}\right)\\  
       \leq \,&  C\left( \|\Bv\|_{L^{\frac{15}{4}}(B_{1/4}(x^*))}\|\nabla\Bv\|_{L^{2}(B_{1/4}(x^*))}+(\|\bBW\|_{W^{1,\infty}(\Omega)} + 1) \|\Bv\|_{W^{1,\frac{30}{23}}(B_{1/4}(x^*))}\right)\\
        \leq \,& C\left(\|\nabla\Bv\|_{L^{2}(B_{1/4}(x^*))}+\|\bBW\|_{W^{1,\infty}(\Omega)} + 1\right) \|\Bv\|_{W^{1,2}(B_{1/4}(x^*))}.
        \end{split}
    \end{equation}
Utilizing the estimate \eqref{eqL10ne2} and $\|\bBW\|_{W^{1,\infty}(\Omega)}$ is bounded, \eqref{eq:interior_stokes} yields
\begin{equation}
\begin{aligned}
&\|\Bv\|_{L^{\infty}(B_{1/8}(x^*))}\\
\le &\, \|\Bv\|_{W^{2,\frac{5}{3}}(B_{1/8}(x^*))}\\
 \le&\, C \left\{  \|\nabla \Bv\|_{L^2(B_{1/4}(x^*))}(\|\nabla \Bv\|_{L^2(B_{1/4}(x^*))}+1)\|\Bv\|_{W^{1,2}(B_{1/4}(x^*))}+\|\Bv\|_{H^1(B_{1/4}(x^*))} \right\}\\ 
\le&\, C \left( 1+\|\Bv\|_{H^1(B_{1/4}(x^*))}  + \|\Bv\|_{H^1(B_{1/4}(x^*))}^2 \right)\cdot \|\Bv\|_{H^1(B_{1/4}(x^*))}\\
\le &\,C \|\Bv\|_{H^1(B_{1/4}(x^*))} .
\end{aligned}
\end{equation}
Recalling the energy decay estimate \eqref{eq:prop4.1_decay} from Proposition \ref{prop4.1}  and since $B_{1/4}(x^*) \subset \Omega_{>R/2}$, we conclude that
\begin{equation}
|\Bv(x^*)| \le \|\Bv\|_{L^\infty(B_{1/8}(x^*))} \le C \|\Bv\|_{H^1(\Omega_{>R/2})} \le C \left(\frac{R}{2}\right)^{-\alpha} \le C |x'^*|^{-\alpha}.
\end{equation}

\emph{Step 2.} \emph{Boundary Estimates.} Now assume that $\text{dist}(x^*, \partial\Omega) < 1/4$. Without loss of generality, suppose $x^*$ is close to the upper boundary $x_3 = 1$. Let $x_0 = (x_1^*, x_2^*, 1) \in \partial\Omega$ be the vertical projection of $x^*$ onto the boundary. It is clear that $x^* \in B_{1/4}^+(x_0) = B_{1/4}(x_0) \cap \Omega$.

We introduce a fixed, smooth cut-off function $\zeta \in C_0^\infty(B_{1/2}(x_0))$ such that $\zeta \equiv 1$ on $B_{1/4}(x_0)$. Since the size of this localized neighborhood is chosen as a fixed universal constant $1/2$, the derivatives of $\zeta$ satisfy uniform bounds $|\nabla \zeta| \le C$ and $|\nabla^2 \zeta| \le C$, which are completely independent of the distance from $x^*$ to the boundary.

Let $\hat{\Bv}(\Bx) = \zeta(\Bx) \Bv(\Bx)$ and $\hat{P}(\Bx) = \zeta(\Bx) P(\Bx)$. Extending $\hat{\Bv}$ and $\hat{P}$ by zero to the entire half-space $\mathbb{R}^3_+$, they satisfy the following standard non-homogeneous Stokes equations:
\begin{equation}\label{eq:boundary_stokes_sys}
\begin{cases}
-\Delta \hat{\Bv} + \nabla \hat{P} = \BF & \text{in } \mathbb{R}^3_+, \\
\mathrm{div} \, \hat{\Bv} = G & \text{in } \mathbb{R}^3_+, \\
\hat{\Bv} = 0 & \text{on } \{x_3 = 1\},
\end{cases}
\end{equation}
where the source and correction terms are given by
\begin{align}
\BF &= -\Bv \Delta \zeta - 2 \nabla \zeta \cdot \nabla \Bv + P \nabla \zeta - \zeta (\Bv \cdot \nabla \Bv + \bBW \cdot \nabla \Bv + \Bv \cdot \nabla \bBW), \\
G &= \Bv \cdot \nabla \zeta.
\end{align}

Inspired by the standard regularity theory for the Stokes equations in the half-space Lemma \ref{bpm} and eliminating the local pressure term on the right-hand side by modifying $\hat{P}$ by an appropriate constant (as  justified in Remark 2.1), we obtain the boundary estimate analogue of the local elliptic estimate:
\begin{equation}\label{eq:boundary_w253}
\|\Bv\|_{W^{2,\frac{5}{3}}(B_{1/4}^+(x_0))} \le C \left( \|\Bv \cdot \nabla \Bv + \bBW \cdot \nabla \Bv + \Bv \cdot \nabla \bBW\|_{L^{\frac{5}{3}}(B_{1/2}^+(x_0))} + \|\Bv\|_{W^{1,\frac{5}{3}}(B_{1/2}^+(x_0))} \right).
\end{equation}

Following the exact same H\"older's inequality and Sobolev embedding arguments as presented in Step 1, and noting that the larger fixed half-ball $B_{1/2}^+(x_0)$ is strictly contained within the far-field exterior domain $\Omega_{>R/2}$ (since $\frac{R}{2} \ge R_1$), the right-hand side of \eqref{eq:boundary_w253} can be  controlled by the $H^1$-energy norm:
\begin{equation}
\|\Bv\|_{W^{2,\frac{5}{3}}(B_{1/4}^+(x_0))} \le C \|\Bv\|_{H^1(B_{1/2}^+(x_0))} \le C \|\Bv\|_{H^1(\Omega_{>R/2})} \le C \left(\frac{R}{2}\right)^{-\alpha} \le C |x'^*|^{-\alpha}.
\end{equation}

Finally, applying the three-dimensional Sobolev embedding theorem $W^{2,\frac{5}{3}}(B_{1/4}^+(x_0)) \hookrightarrow L^\infty(B_{1/4}^+(x_0))$, we achieve the pointwise control at the boundary:
\begin{equation}
|\Bv(x^*)| \le \|\Bv\|_{L^\infty(B_{1/4}^+(x_0))} \le C |x'^*|^{-\alpha}.
\end{equation}
This completes the proof of Proposition \ref{prop4.2}.
\end{proof}

\section{uniqueness for solutions with small force}\label{Sec4}

In this section, we establish the   uniqueness of the finite-energy perturbation field $\Bv = \Bu - \bBW$, which completes the proof of Theorem 1.1. As discussed in Section \ref{Sec3}, while the existence and far-field decay of the solution only require the compact support of the external force, its uniqueness is more restrictive. To strictly control the nonlinear convection terms, an additional smallness assumption on $
\Bf$ is required. We now rigorously justify this uniqueness property.

\begin{pro}\label{prop3.1}
Let  $\Bu = \bBW + \Bv$ be the weak solution obtained in  Proposition \ref{prop4.1}. There exists a  small constant $\epsilon_1 > 0$ such that if the external force additionally satisfies  $\|\Bf\|_{V^{\prime}} \le \epsilon_1$, then $\Bu$ is unique in the class of finite-energy perturbations. Specifically, if $\BU_{sol} = \bBW + \BU$ is another weak solution to the Navier-Stokes equations \eqref{eqsteadynfsf}  such that its perturbation satisfies $\BU \in H^{1}(\Omega)$, then $\Bu = \BU_{sol}$.
\end{pro}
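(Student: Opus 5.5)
We argue by a direct energy estimate on the difference of the two perturbations. Let $\bBW$ be fixed, let $\Bv=\Bu-\bBW$ be the solution from Proposition \ref{prop4.1}, and let $\BU\in H^1(\Omega)$ be the perturbation of another weak solution. Then $\BU\in D^{1,2}_{0,\sigma}(\Omega)=V$: the no-slip condition and the divergence-free condition pass from $\BU_{sol}$ to $\BU$ because $\bBW$ vanishes on $x_3=0,1$ and $\nabla\cdot\bBW=0$. Both $\Bv$ and $\BU$ satisfy the weak form of \eqref{eq:pert_v_global} against test functions in $C^\infty_{c,\sigma}(\Omega)$. We set $\Bw=\Bv-\BU\in V$ and subtract the two weak formulations. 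Using $(\Bv\cdot\nabla)\Bv-(\BU\cdot\nabla)\BU=(\Bw\cdot\nabla)\Bv+(\BU\cdot\nabla)\Bw$, this gives, for every $\boldsymbol{\varphi}\in C^\infty_{c,\sigma}(\Omega)$,
\[
\int_\Omega \nabla\Bw:\nabla\boldsymbol{\varphi}\,\dd\Bx+\int_\Omega\big[(\Bw\cdot\nabla)\Bv+(\BU\cdot\nabla)\Bw+(\bBW\cdot\nabla)\Bw+(\Bw\cdot\nabla)\bBW\big]\cdot\boldsymbol{\varphi}\,\dd\Bx=0 .
\]

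Next we justify taking $\boldsymbol{\varphi}=\Bw$. Each trilinear form $b(\boldsymbol a,\boldsymbol b,\boldsymbol c)=\int(\boldsymbol a\cdot\nabla)\boldsymbol b\cdot\boldsymbol c$ is continuous on $V\times V\times V$. In the slab this follows from the Poincaré inequality in $x_3$ together with Sobolev embedding, which give $\|\boldsymbol a\|_{L^4(\Omega)}\le C\|\nabla\boldsymbol a\|_{L^2(\Omega)}$ for $\boldsymbol a\in V$, since $V\subset H^1(\Omega)\hookrightarrow L^4$. The term containing $\bBW$ is controlled by $\|\bBW\|_{W^{1,\infty}}$. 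By density of $C^\infty_{c,\sigma}$ in $V$ we may therefore test with $\Bw$. The antisymmetry $b(\boldsymbol a,\Bw,\Bw)=0$ for divergence-free $\boldsymbol a$ with zero trace removes both the $\BU$ term and the $\bBW$ term; this is legitimate for $\bBW$ because $\bBW$ is bounded and $\Bw$ vanishes on the boundary. We obtain
\[
\|\nabla\Bw\|_{L^2}^2=-\int_\Omega(\Bw\cdot\nabla)\Bv\cdot\Bw\,\dd\Bx-\int_\Omega(\Bw\cdot\nabla)\bBW\cdot\Bw\,\dd\Bx .
\]

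The last term is bounded by $C\|\nabla\bBW\|_{L^\infty}\|\nabla\Bw\|_{L^2}^2\le\frac14\|\nabla\Bw\|_{L^2}^2$, exactly as in Step 1 of Proposition \ref{prop4.1}, because $|F|\le\epsilon_0$. For the first term,
\[
\Big|\int_\Omega(\Bw\cdot\nabla)\Bv\cdot\Bw\,\dd\Bx\Big|\le\|\Bw\|_{L^4}^2\|\nabla\Bv\|_{L^2}\le C_*\|\nabla\Bw\|_{L^2}^2\|\nabla\Bv\|_{L^2}.
\]
Step 1 of Proposition \ref{prop4.1} gives the a priori bound $\|\nabla\Bv\|_{L^2}\le2\|\Bf\|_{V'}\le2\epsilon_1$. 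Choosing $\epsilon_1$ so that $2C_*\epsilon_1\le\frac14$, we get $\frac12\|\nabla\Bw\|_{L^2}^2\le0$, hence $\Bw=0$, since $\Bw$ vanishes on the boundary and the Poincaré inequality applies. Therefore $\BU=\Bv$ and $\BU_{sol}=\Bu$.

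The main obstacle is the asymmetry between the two solutions. The smallness bound $\|\nabla\Bv\|\le2\|\Bf\|_{V'}$ is available for the constructed solution $\Bv$, and the argument is arranged so that only $\nabla\Bv$ appears in the surviving nonlinear term. For this to work, the convective difference must be split as $(\Bw\cdot\nabla)\Bv+(\BU\cdot\nabla)\Bw$ and not the other way around, so that the uncontrolled $\BU$ lands in the term that vanishes by antisymmetry.

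A subtler point is the justification of the energy equality for $\BU$ and $\Bw$, namely that the weak formulation extends to test functions in $V$. This rests on the $L^4$ embedding of $H^1$ functions in the slab of finite thickness, which makes every trilinear term finite; that is why the hypothesis $\BU\in H^1(\Omega)$, and not merely finite Dirichlet integral, is used.
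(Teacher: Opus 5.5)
Your proposal is correct and follows the paper's proof. It uses the same difference equation with the split $(\Bw\cdot\nabla)\Bv+(\BU\cdot\nabla)\Bw$, tests with $\Bw$ so that the $\BU$- and $\bBW$-transport terms vanish, and absorbs the remaining terms via $\|\nabla\Bv\|_{L^2}\le 2\|\Bf\|_{V'}$ and the smallness of $\|\nabla\bBW\|_{L^\infty}$. The differences are cosmetic: you bound $\int_\Omega(\Bw\cdot\nabla)\Bv\cdot\Bw\,\dd\Bx$ by $\|\Bw\|_{L^4}^2\|\nabla\Bv\|_{L^2}$ instead of integrating by parts with $L^6\times L^2\times L^3$, and you supply the density justification for testing with $\Bw$, which the paper leaves implicit.
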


\begin{proof}[Proof for Proposition  \ref{prop3.1}]

 Let $\Bw = \Bv - \BU$ and $q = p_{\Bv} - p_{\BU}$, so that the  $\Bw$ satisfies:
\begin{equation}\label{eq:uniq_w_final}
-\Delta \Bw + (\Bw\cdot\nabla)\Bv + (\BU\cdot\nabla)\Bw + (\bBW\cdot\nabla)\Bw+ (
\Bw\cdot\nabla)\bBW + \nabla q = 0.
\end{equation}
Since both $\Bv$ and $\BU$ belong to $H^1(\Omega)$, their difference $\Bw$ belongs to $H^1(\Omega)$. 


Multiplying \eqref{eq:uniq_w_final} by $\Bw$ and integrating over the slab $\Omega$, the terms $\int_{\Omega} (\BU\cdot\nabla)\Bw\cdot \Bw \,\dd \Bx$,\linebreak and $\int_{\Omega} (\bBW\cdot\nabla)\Bw\cdot \Bw \,\dd \Bx$     vanish identically due to the incompressibility $\nabla \cdot \BU = \nabla \cdot \bBW = 0$ and the no-slip boundary conditions. This implies
\begin{equation}\label{eq:uniq_energy}
\int_{\Omega} |\nabla \Bw|^2 \, \dd \Bx = - \int_{\Omega} (\Bw\cdot\nabla)\Bv \cdot \Bw \,\dd \Bx - \int_{\Omega} (\Bw\cdot\nabla)\bBW \cdot \Bw \, \dd \Bx.
\end{equation}
Integrating by parts and applying the  Hölder's inequality and Sobolev embeddings $H^1(\Omega) \hookrightarrow L^3(\Omega) \cap L^6(\Omega)$  to the right-hand side of \eqref{eq:uniq_energy}, we obtain
\begin{align*}
\left| \int_{\Omega} (\Bw\cdot\nabla)\Bv \cdot \Bw \,\dd \Bx \right| &\le \|\Bw\|_{L^6(\Omega)} \|\nabla \Bw\|_{L^2(\Omega)} \|\Bv\|_{L^3(\Omega)} \le C\|\Bv\|_{H^1(\Omega)} \|\nabla \Bw\|_{L^2(\Omega)}^2, \\
\left| \int_{\Omega} (\Bw\cdot\nabla)\bBW \cdot \Bw \, \dd \Bx \right| &\le \|\Bw\|_{L^2(\Omega)} \|\nabla \Bw\|_{L^2(\Omega)} \|\nabla \bBW\|_{L^\infty(\Omega)} \le C \|\nabla \bBW\|_{L^\infty(\Omega)} \|\nabla \Bw\|_{L^2(\Omega)}^2.
\end{align*}
Combining these bounds into \eqref{eq:uniq_energy} yields:
\begin{equation}\label{cp}
\left( 1 - C \|\Bv\|_{H^1(\Omega)} - C\|\nabla \bBW\|_{L^\infty(\Omega)} \right) \|\nabla \Bw\|_{L^2(\Omega)}^2 \le 0.
\end{equation}
Since the  perturbation $\Bv$  satisfies $\|\Bv\|_{H^1(\Omega)} \le C\|\Bf\|_{V^{\prime}} \le C\epsilon_1 $ from the proof in Proposition \ref{prop4.1}, and the Poiseuille flow satisfies $\|\nabla \bBW\|_{L^\infty(\Omega)} \ll 1$ (since 
 $|F| \le \epsilon_0$), the coefficient on the left-hand side of \eqref{cp} is strictly positive.
Consequently, we obtain $\|\nabla \Bw\|_{L^2(\Omega)} = 0$. In view of the no-slip boundary conditions, this implies $\Bw \equiv 0$, which means $\BU \equiv \Bv$. The proof of Proposition \ref{prop3.1} is completed.
\end{proof}

	{\bf Acknowledgement.}  J. Han was supported by  Start-up funds for doctoral research of Anhui Normal University (No. 762610).
	The authors thank Professor Yun Wang for valuable discussions and suggestions.

\end{document}